\def\captionof#1#2{{\def\@captype{#1}#2}}
\def\1{\mbox{\bf 1}}
\def\R{\mathbb{R}}
\def\N{\mathbb{N}}
\def\P{\mathbb{P}}
\def\E{\mathbb{E}}
\def\R{\mathbb{R}}
\def\Z{\mathbb{Z}}
\newtheorem{theo}{Theorem}
\newtheorem{prop}{Proposition}
\newtheorem{cor}{Corollary}
\newtheorem{Def/Prop}{Definition-Proposition}
\newtheorem{Remark}{Remark}
\newcounter{exos}
\renewcommand\theexos{\arabic{exos}}
\newcounter{prob}
\renewcommand\theprob{\arabic{prob}}
\begin{document}
\author{Konstantinos Fokianos\footnote{Lancaster University, Department of Mathematics \& Statistics, Fylde College, Lancaster, LA1 4YF,
United Kingdom {\it Email: k.fokianos@lancaster.ac.uk}.} \and
Lionel Truquet \footnote{UMR 9194 CNRS CREST, ENSAI, Campus de Ker-Lann, rue Blaise Pascal, BP 37203, 35172 Bruz cedex, France. {\it Email: lionel.truquet@ensai.fr}.}
 }
\title{On Categorical Time Series Models with Covariates}
\date{First version: October  2017 \\ Second version: July 2018}
\maketitle

\begin{abstract}
\noindent
We study the problem of stationarity and ergodicity for autoregressive
multinomial logistic  time series models which possibly  include a latent process and are defined by
a GARCH-type recursive equation.  We improve considerably upon the existing conditions  about  stationarity and ergodicity
of those  models.
Proofs  are based on  theory  developed for
chains with complete connections.  A useful coupling technique is employed
for studying ergodicity of infinite order  finite-state stochastic processes which  generalize  finite-state Markov chains.
Furthermore, for the case of  finite order Markov chains, we  discuss ergodicity properties of a model which includes
strongly exogenous but not necessarily bounded  covariates.
\end{abstract}
\vspace*{1.0cm}

\footnoterule
\noindent
{\sl 2010 Mathematics Subject Classification:} Primary 62M10; secondary 60G10, 60B12.\\
\noindent
{\sl Keywords and Phrases:} autoregression, categorical data,  chains with complete connection, coupling, covariates, ergodicity, Markov chains. \\

\newpage

\section{Introduction}
The goal of this article is to improve upon  theoretical properties of regression based models for the analysis of categorical
time series that might include some covariates which are not  necessarily  bounded.
Binary time series are particular cases of a categorical time series and the results we obtain
apply to logistic autoregressive models.
The conditional distribution of a categorical time series given its past is multinomial which obviously  belongs to the multivariate exponential family of distributions. As such,
the theory of generalized linear models, see \citet{McCullaghandNelder(1989)}, can be applied for modeling different  types of categorical data; nominal, interval and scale. We  will be mostly concerned
with nominal data and therefore the multinomial logistic model is the natural candidate for model fitting; see
\citet{Fahrmeir2001} and \citet{Fokianos2002a}, among other references,  for further discussion on modeling issues regarding categorical data.
We emphasize that  finite state Markov chains provide a simple but prominent model  of a categorical time series where lagged values of the response affect the determination of  its future states.
However, Markov modeling in the context of categorical time series,  poses challenging problems. Indeed,
as the order of the Markov chain increases so does the number of free parameters; in fact, the number of free parameters increases  exponentially fast.  Furthermore,
the Markovian  property requires simultaneous  specification of the  dynamics of the response and any possible  covariates observed jointly; such a specification might not be possible, in general.

We will be studying models for binary and, more generally, categorical time series, which are of infinite order or they are
driven by a latent process or a feedback mechanism.
This type of models is quite analogous to GARCH models -see
\citet{Bollerslev(1986)}-
but they are defined in terms of conditional log--odds instead of conditional variances.
In particular,  feedback models  make possible low dimensional parametrization, yet they can accommodate quite complicated data structures.
Examples of feedback models, in the context of binary and categorical  time series have been studied recently by \citet{Fok1} and \citet{Fok2}, among others.
We will discuss their  results and we will compare them with our findings.
Models and inference for  binary time series  are research  topics that have been studied by several authors;  see
\cite{Kedem1980book}
for an early treatment. Regression modeling, in this context, has been
studied by \citet{Cox1981},
\cite{Stern1984},
and
\cite{SludKedem1994}, among others; see also \citet[Ch. 2-3]{Fokianos2002a} for other early references.
Recently, binary time series data have been increasingly popular in various financial applications
(\cite{Breen1989},
\cite{Butler1992},
\cite{Christoffersen2006},
\cite{Christoffersen2007},
\cite{startz2008binomial},
\cite{Nyberg2010a,nyberg2011forecasting,nyberg2013predicting},
\cite{Kauppi2012}
and
\cite{wu2014parameter}),
but also to other scientific fields.
Previous results related to  theoretical properties of binary time series models were given by \cite{deJong2011}.

Related work on categorical time series has been reported by \cite{Fahrmeir1987b}, \citet{Kaufmann(1987)}, \citet{Fokianos2003}
and \citet{Engle1998,Engle2005} who proposed a categorical time series model  for  financial transactions data.
Alternative classes of models are  based on the probit link function.
Such autoregressive models have been considered by \citet{Zeger1988a}, \citet{Rydberg2003},
\citet{Kauppi2008a},
among others. Several alternative classes of  models for the analysis of categorical data have been studied; see the books by \citet{Joe(1997)}
and \citet{MacDonald1997}
and the  articles by \citet{Biswas2009} and
\citet{Weiss2011}.

To prove the theoretical results, we will be  assuming a contraction type condition; such conditions are usually employed for the theoretical analysis of time
series models.  For instance, in the   case of count time series models,  see \cite{Fok}, \cite{Neumann2011} and
\cite{Doukhan2012c}.  However, our work is closely  related to the modeling approach suggested by
\citet{Fokianos2011b}, because the main idea is, essentially, to employ the so called
canonical link process to model the observed data.
Note that \cite{deJong2011} have shown  near epoch dependence for a binary time series models but these authors have a different modeling point of view.

Likelihood based inference for the models we study  can be developed along the lines of previous references by appealing to the
properties of  multinomial
distribution.  The proof of consistency and asymptotic normality for the maximum likelihood estimator
is based on standard arguments concerning  convergence of the score function and the Hessian matrix. In addition, differentiability
properties of multinomial likelihood allow obtaining suitable bounds for higher order derivatives.
However, we mention that this work relaxes considerably previous results.
For the case of a model with covariates we improve considerably upon \citet{Kaufmann(1987)} and \citet[Ch.3]{Fokianos2002a} because we avoid any assumptions on the design of covariates. In addition, we  show that we  obtain ergodicity even in the case of finite order models with
unbounded covariates. The study of maximum
likelihood estimation requires existence of appropriate moments for the covariate process though.
Central limit theorems for the maximum likelihood estimators have been given in the previous references
and therefore we do not give any further details.

Section \ref{sec:simple AR} discusses  general categorical time series  models
by allowing the conditional probabilities to depend on the whole past of the series. In addition we will be giving  a result about the stationarity and ergodicity of chains with complete connections.
These results   will be applied to the case of an infinite order autoregressive  multinomial logistic
model. Section \ref{sec:Latent AR} discuss models which might include a latent process. The results obtained by
Theorem \ref{main} improve the results obtained by  \citet{Fok1} and \citet{Fok2}. Finally, Section \ref{sec:cov}
discuss inclusion of exogenous covariates to the autoregressive multinomial logistic model of finite order.
Theorem \ref{goodresult}, which  is the main result in this section, proves  existence of such processes and their ergodicity.

\section{Time series autoregressive  models for categorical data}
\label{sec:simple AR}

\subsection{A general approach}
Let ${\cal A}$ be a finite set. For simplicity, we assume that ${\cal A}=\{1,2,\ldots,N\}$, where $N$ is a nonnegative integer. Suppose that we observe a process
with state space ${\cal A}$ and we are interested on modeling its dynamics. For instance, consider modeling of a stock price change (0 for no change, 1 for positive change and -1 for a negative change; see \citet{Engle1998}),
sleep state status (see \citet{Fokianos2003}), and  wage mobility data (see \citet{PammingerandSchnatter(2010)}).
Towards this goal,  define a $(N-1)$-dimensional vector $Y_t=\left(Y_{1t},Y_{2t},\ldots,Y_{(N-1)t}\right)'$, for $1\leq t\leq n$, such that
\begin{eqnarray*}
Y_{kt} = \left\{ \begin{array}{ll}
1, & \textrm{if the $k$'th category is observed at time $t$,}\\
0, & \textrm{otherwise},
\end{array} \right.
\end{eqnarray*}
for all $k=1,2,\ldots, N-1$. Throughout this work,  consider a stochastic processes $(Y_t)_{t\in \Z}$ adapted to  a  filtration $\left(\mathcal{F}_t\right)_{t\in\Z}$ which is defined through a vector of conditional "success"
probabilities, say $p_{t}\equiv(p_{1t},p_{2t},\ldots,p_{(N-1)t})'$. In other words
\begin{equation}\label{def}
p_{kt}=\P\left(Y_{kt}=1\vert \mathcal{F}_{t-1}\right),\quad 1\leq j\leq N-1.
\end{equation}
For the  last category $N$,  set $Y_{Nt}=1-\sum_{k=1}^{N-1}Y_{kt}$ and
$p_{Nt}=1-\sum_{k=1}^{N-1}p_{kt}$.

There are several possibilities for autoregressive modeling of processes that take values on a finite space.
For instance, assuming  that  $d$ is  a vector and $A$, $B$ matrices of appropriate dimension,
consider the following linear model
\begin{equation}
\label{TM:S:DMBCTS:E:the special multi model for lag=1 with identity link,noc=m}
    p_{t} = d + A p_{t-1} + B Y_{t-1},\quad t\in \mathbb{Z},
\end{equation}
which was studied  by  \citet{Engle1998} and \citet{Qaqish2003}. Model
\eqref{TM:S:DMBCTS:E:the special multi model for lag=1 with identity link,noc=m}
implies quite complex  restrictions on the parameters $d, A$ and $B$  because each element of the vector
$p_t$ has to  belong in the interval $(0,1)$.  Such restrictions become even more involved when a covariate process is included in \eqref{TM:S:DMBCTS:E:the special multi model for lag=1 with identity link,noc=m}. To avoid such subtle technicalities, we adapt the generalized linear models point of view by
considering a canonical link model;
see \citet{Fokianos2003} for instance.
For $k=1,2,\ldots,N-1$, define
$$\lambda_{kt}=\log\left(p_{kt}/p_{Nt}\right)$$
and suppose that the  vector process  $\lambda_t=\left(\lambda_{1t},\ldots,\lambda_{(N-1)t}\right)'$ is determined by the
infinite order model
\begin{equation}\label{cool1}
\lambda_t=g\left(Y_{t-1},Y_{t-2},\ldots\right),
\end{equation}
where $g$ is a suitably defined function. Then, the
process $(Y_t)_{t\in\Z}$ which satisfies  (\ref{def}) and (\ref{cool1}),
takes its values in the set $E=\{e_1,e_2,\ldots,e_{N-1},{\bf 0}\}$
where $\{e_1,\ldots,e_{N-1}\}$ is the canonical basis of $\R^{N-1}$ and ${\bf 0}$ is the null vector of $\R^{N-1}$.
Furthermore, $g:E^{\N}\rightarrow \R^{N-1}$ is a measurable function and
the conditional distribution of $Y_t$ given  its past values $Y_{t-1}^{-} \equiv \left(Y_{t-1},Y_{t-2},\ldots\right)$
possibly depends on its infinite past. A useful  example of such process is given by the linear process
\begin{equation}
\label{eq:linear no feedback}
\lambda_{t} = d + \sum_{j \geq 1} A_{j} Y_{t-j}
\end{equation}
where $d$ is a $(N-1)$-dimensional vector and $(A_{j})_{j \geq 1}$ is a sequence of $(N-1)\times (N-1)$ matrices.
Comparison of  \eqref{eq:linear no feedback} to  \eqref{TM:S:DMBCTS:E:the special multi model for lag=1 with identity link,noc=m}
shows that unnecessary  restrictions on the unknown coefficients can be circumvented  since the vector $\lambda_{t} \in \R^{N-1}$.
Furthermore, covariates can be easily included in \eqref{eq:linear no feedback} by including an additional additive term.
Other categorical type autoregressive  models can be considered but \eqref{eq:linear no feedback} has been  used in several applications.
In the case that $N=2$, then \eqref{eq:linear no feedback} is a simple logistic regression model which has been studied widely in the literature (see \citet{CoxandSnell(1970)}
for an early reference).

Autoregressive models, as those we consider in this work,  are  particular
examples of a more general class of processes  which are called \emph{chains with complete connections}.
Such processes  have been  widely studied in applied probability;
\citet{Doeblin}, \citet{Harris} and \citet{Ios}.
Following the work of  \citet{Bres}, we discuss next a  coupling technique related to   chains with complete connections.

\subsection{Some results about chains with complete connection}
Throughout  this section, consider a finite state space $E$.
For $x,y\in E^{\N}$ and a positive integer $m$, we  write $x\stackrel{m}= y$ if $x_i=y_i$ for $0\leq i\leq m-1$.
Consider a probability kernel $p(\cdot \vert \cdot)$ defined on  $\left(E^{\N},\mathcal{B}\left(E^{\N}\right)\right)$
and taking values on  $\left(E,\mathcal{B}(E)\right)$ which satisfies the following assumption:
\begin{description}
\item
[Assumption (A)] There exists a sequence $(\gamma_m)_{m\in\N}$  which decreases  to zero, as $m \rightarrow \infty$, with $\gamma_0<1$ and
such that for $a \in E$
$$\inf_{x,y:x\stackrel{m}= y }\frac{p\left(a\vert x\right)}{p\left(a\vert y\right)}\geq 1-\gamma_m.$$
\end{description}
\emph{A chain with complete connections} is a stationary process satisfying  Assumption \textbf{(A)}.

For $x \in E^{\N}$, consider the chain $\left(Z_n^x\right)_{n\in\Z}$ which satisfies that $Z_{-j}^x=x_j$ for $j\geq 1$ and
$$\P\left(Z_n^x=a\vert Z^x_{n-j}=z_j, j\geq 1\right)=p(a\vert z)\prod_{j=n+1}^{\infty}\mathds{1}_{z_j=x_{j-n}},\quad n\geq 0.$$
In addition, given a real-valued sequence $(\gamma_{n})_{n \in \N}$,
let  the Markov chain $\left(S_n^{(\gamma)}\right)_{n\in\N}$  taking values in $\N$
and  defined by
$$\P\left(S^{(\gamma)}_0=0\right)=1,\quad \P\left(S_{n+1}^{(\gamma)}=i+1\vert S_n^{(\gamma)}=i\right)=1-\gamma_i,\quad \P\left(S_{n+1}^{(\gamma)}=0\vert S_n^{(\gamma)}=i\right)=\gamma_i.$$
For $n \geq 1$, define the quantity
$$
\gamma_n^{*}=\P\left(S_n^{(\gamma)}=0\right),
$$
which plays  a  crucial  rule for evaluating the mixing coefficients of the chain.
The following result is given by  \citet[Prop. 1 and Lemma 1]{Bres}.
\begin{prop}\label{central}
For all $x,y\in E^{\N}$, there is a coupling $\left(\left(U_n^{x,y},V_n^{x,y}\right)\right)_{n\in\Z}$ of $\left(Z_n^x\right)_{n\in\Z}$ and $\left(Z_n^y\right)_{n\in\Z}$ such that the integer-valued process $\left(T_n^{x,y}\right)_{n\in\Z}$ defined by
$$T_n^{x,y}=\inf\left\{m\geq 0: U_{n-m}^{x,y}\neq V_{n-m}^{x,y}\right\},$$
satisfies
$$\P\left(S_n^{(\gamma)}\geq k\right)\leq \P\left(T_n^{x,y}\geq k\right)~~\forall k \in \N.$$
\end{prop}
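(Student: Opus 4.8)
The plan is to realise the coupling by a step-by-step forward construction: at each time $n\ge0$ I would apply a maximal coupling to the two one-step conditional laws, and I would feed this construction with the same uniform variables that generate a copy of $\left(S_n^{(\gamma)}\right)$, arranged so that the copy stays pointwise below $\left(T_n^{x,y}\right)$. The only input from the hypotheses is the following reformulation of Assumption \textbf{(A)}: if $u,v\in E^{\N}$ satisfy $u\stackrel{m}= v$, then $p(a\vert u)\ge(1-\gamma_m)\,p(a\vert v)$ for every $a\in E$, hence $p(a\vert u)\wedge p(a\vert v)\ge(1-\gamma_m)\,p(a\vert v)$, and summing over $a$ the overlap mass $\rho(u,v):=\sum_{a\in E}p(a\vert u)\wedge p(a\vert v)$ of the two conditional laws satisfies $\rho(u,v)\ge1-\gamma_m$.

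For the construction, put $U_n^{x,y}:=x_{-n}$ and $V_n^{x,y}:=y_{-n}$ for $n\le-1$, and let $(W_n)_{n\ge0}$ be i.i.d.\ uniform on $[0,1]$, together with any auxiliary independent randomisation, all independent of the boundary. Assume $\left(U_j^{x,y},V_j^{x,y}\right)$ has been defined for $j\le n-1$; let $\mathcal{G}_{n-1}$ be the $\sigma$-field it generates, write $u=\left(U_{n-1}^{x,y},U_{n-2}^{x,y},\ldots\right)$ and $v=\left(V_{n-1}^{x,y},V_{n-2}^{x,y},\ldots\right)$ for the current pasts, and set $r:=T_{n-1}^{x,y}$. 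By the very definition of $T_{n-1}^{x,y}$ one has $u\stackrel{r}= v$, hence $\rho(u,v)\ge1-\gamma_r$ (with the convention $\gamma_\infty:=0$, which covers the case $u=v$). I would then realise the maximal coupling of $\mu:=p(\cdot\vert u)$ and $\nu:=p(\cdot\vert v)$ driven by $W_n$ so that $\{W_n<\rho(u,v)\}\subseteq\{U_n^{x,y}=V_n^{x,y}\}$, the common value on that event having conditional law $(\mu\wedge\nu)/\rho(u,v)$ and $U_n^{x,y},V_n^{x,y}$ being conditionally independent with the two normalised residual laws on the complementary event. A direct computation shows that, given $\mathcal{G}_{n-1}$, $U_n^{x,y}$ has law $\mu=p(\cdot\vert u)$ and $V_n^{x,y}$ has law $\nu=p(\cdot\vert v)$; since each of these depends only on the corresponding past, the tower property yields that $\left(U_n^{x,y}\right)_n$ is a copy of $\left(Z_n^x\right)$ and $\left(V_n^{x,y}\right)_n$ a copy of $\left(Z_n^y\right)$, so the construction is a coupling of the two chains.

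It then remains to exhibit a copy of $\left(S_n^{(\gamma)}\right)$ lying below $\left(T_n^{x,y}\right)$. Set $R_0:=0$ and $R_n:=(R_{n-1}+1)\,\mathds{1}_{\{W_n<1-\gamma_{R_{n-1}}\}}$ for $n\ge1$. Since $W_n$ is uniform and independent of $\mathcal{G}_{n-1}$ and of $R_0,\ldots,R_{n-1}$, from state $i$ the chain $R$ moves to $i+1$ with probability $1-\gamma_i$ and to $0$ with probability $\gamma_i$, so $\left(R_n\right)_{n\in\N}\stackrel{d}{=}\left(S_n^{(\gamma)}\right)_{n\in\N}$. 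I would prove $R_n\le T_n^{x,y}$ for all $n\ge0$ by induction: it is clear at $n=0$, and if $R_{n-1}\le T_{n-1}^{x,y}=:r$ then either $W_n\ge1-\gamma_{R_{n-1}}$, giving $R_n=0\le T_n^{x,y}$, or $W_n<1-\gamma_{R_{n-1}}$, in which case monotonicity of $(\gamma_m)$ together with $R_{n-1}\le r$ gives $W_n<1-\gamma_{R_{n-1}}\le1-\gamma_r\le\rho(u,v)$, so $U_n^{x,y}=V_n^{x,y}$ by construction and therefore $T_n^{x,y}=r+1\ge R_{n-1}+1=R_n$. Hence $\{R_n\ge k\}\subseteq\{T_n^{x,y}\ge k\}$, and passing to probabilities gives $\P\left(S_n^{(\gamma)}\ge k\right)=\P\left(R_n\ge k\right)\le\P\left(T_n^{x,y}\ge k\right)$.

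The step I expect to require the most care is the per-step construction in the second paragraph: the coupling must simultaneously preserve the marginal dynamics of $U^{x,y}$ and $V^{x,y}$, place the event $\{W_n<1-\gamma_r\}$ of guaranteed agreement inside $\{U_n^{x,y}=V_n^{x,y}\}$, and be driven by the very uniforms $W_n$ that also produce $R$, so that the monotonicity of $(\gamma_m)$ makes the domination $R_n\le T_n^{x,y}$ come out automatically. Everything else is routine bookkeeping.
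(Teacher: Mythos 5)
Your proof is correct and follows exactly the route the paper sketches (and attributes to Bressaud, Fern\'andez and Galves): iterate the maximal coupling of the two conditional laws, drive it with the same uniforms that generate a copy of $S^{(\gamma)}$, and use monotonicity of $(\gamma_m)$ to get the pointwise domination $R_n\leq T_n^{x,y}$ by induction. The paper gives only this one-line description and defers the details to the cited reference, so your write-up is simply a complete version of the same argument.
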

Proposition \ref{central} is proved by defining
iteratively  the pair $\left(U_n^{x,y},V_n^{x,y}\right)$ using the
maximal coupling of the conditional distributions
$p\left(\cdot\vert (u_{n-j})_{j\geq 1}\right)$ and
$p\left(\cdot\vert (v_{n-j})_{j\geq 1}\right)$ (i.e the coupling associated to the total variation distance between these conditional distributions).
Proposition \ref{central} yields  the following corollary (see also \citet[Cor. 1]{Bres} for a specific case of the following result).

\begin{cor}\label{central2}
For all $k\geq 1$, $x,y\in E^{\N}$ and $B\in \mathcal{B}(E^k)$, we have
$$\left\vert \P\left((Z_n^x,\ldots,Z_{n+k}^x)\in B\right)-\P\left((Z_n^y,\ldots,Z_{n+k}^y)\in B\right)\right\vert
\leq \sum_{j=0}^k \left(\prod_{m=0}^{j-1}\left(1-\gamma_m\right)\right)\gamma^{*}_{n+k-j}.$$
\end{cor}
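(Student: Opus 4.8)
The plan is to push everything onto the coupling supplied by Proposition~\ref{central} and then to recognize the block--disagreement event as an event about $T_{n+k}^{x,y}$, which by that proposition is stochastically dominated by the renewal chain $S^{(\gamma)}$. First I would fix $x,y\in E^{\N}$ and take the coupling $\left(\left(U_n^{x,y},V_n^{x,y}\right)\right)_{n\in\Z}$ from Proposition~\ref{central}. Since this is a coupling of the whole processes $\left(Z_n^x\right)_{n}$ and $\left(Z_n^y\right)_{n}$, the block $\left(U_n^{x,y},\ldots,U_{n+k}^{x,y}\right)$ has the same law as $\left(Z_n^x,\ldots,Z_{n+k}^x\right)$, and likewise on the $y$--side. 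Writing $\mathbf{U}=\left(U_n^{x,y},\ldots,U_{n+k}^{x,y}\right)$ and $\mathbf{V}=\left(V_n^{x,y},\ldots,V_{n+k}^{x,y}\right)$, the variable $\1_{\mathbf{U}\in B}-\1_{\mathbf{V}\in B}$ vanishes on $\{\mathbf{U}=\mathbf{V}\}$ and has modulus at most $1$, so the elementary coupling inequality gives
$$\left\vert\P\left(\left(Z_n^x,\ldots,Z_{n+k}^x\right)\in B\right)-\P\left(\left(Z_n^y,\ldots,Z_{n+k}^y\right)\in B\right)\right\vert\leq\P\left(\mathbf{U}\neq\mathbf{V}\right).$$

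The key step is the identification of this last event. By definition $T_{n+k}^{x,y}=\inf\left\{m\geq 0:\ U_{n+k-m}^{x,y}\neq V_{n+k-m}^{x,y}\right\}$, so (after the substitution $m=k-i$) the event $\mathbf{U}\neq\mathbf{V}$, i.e.\ that $U$ and $V$ disagree at some index of $\{n,n+1,\ldots,n+k\}$, holds exactly when $T_{n+k}^{x,y}\leq k$. Proposition~\ref{central} yields $\P\left(S_{n+k}^{(\gamma)}\geq\ell\right)\leq\P\left(T_{n+k}^{x,y}\geq\ell\right)$ for every $\ell\in\N$, hence, taking complements, $\P\left(T_{n+k}^{x,y}\leq k\right)\leq\P\left(S_{n+k}^{(\gamma)}\leq k\right)=\sum_{j=0}^k\P\left(S_{n+k}^{(\gamma)}=j\right)$. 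Finally I would compute $\P\left(S_{n+k}^{(\gamma)}=j\right)$ directly: because $S^{(\gamma)}$ can only increase by one or drop to $0$, the event $\{S_{n+k}^{(\gamma)}=j\}$ forces $S_{n+k-j}^{(\gamma)}=0$ followed by $j$ consecutive upward steps, so reading off the transition probabilities gives $\P\left(S_{n+k}^{(\gamma)}=j\right)=\gamma^{*}_{n+k-j}\prod_{m=0}^{j-1}\left(1-\gamma_m\right)$; summing over $j=0,\ldots,k$ produces precisely the claimed bound.

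There is no single hard step here; the points that need attention are essentially bookkeeping. One should check that the coupling genuinely preserves the finite--dimensional laws of the blocks (immediate from the statement of Proposition~\ref{central}), that the reindexing making $\{\mathbf{U}\neq\mathbf{V}\}=\{T_{n+k}^{x,y}\leq k\}$ is correct, and that the renewal computation lines up with the stated product --- in particular the edge cases $j=0$ (empty product, term $\gamma^{*}_{n+k}$) and $j=k$ (term $\gamma^{*}_n\prod_{m=0}^{k-1}(1-\gamma_m)$). It is also worth noting that the block $\left(Z_n^x,\ldots,Z_{n+k}^x\right)$ has $k+1$ coordinates, so $B$ should be read as an element of $\mathcal{B}\left(E^{k+1}\right)$; this does not affect the argument.
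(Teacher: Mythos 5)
Your proof is correct and follows essentially the same route as the paper: the coupling inequality, the identification of the block-disagreement event with $\{T^{x,y}_{n+k}\le k\}$, the stochastic domination from Proposition~\ref{central}, and the renewal computation of $\P\left(S^{(\gamma)}_{n+k}=j\right)$ (which the paper simply cites from Bressaud et al., eq.\ (4.25), while you carry out explicitly). Your index $n+k$ on $T$ and $S$ is the one consistent with the definition of $T^{x,y}_n$ and with the final bound $\sum_{j=0}^k\bigl(\prod_{m=0}^{j-1}(1-\gamma_m)\bigr)\gamma^{*}_{n+k-j}$; the paper's displayed proof writes $n$ there, which is a minor slip.
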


\begin{proof}
Using  Proposition \ref{central}, we obtain
\begin{eqnarray*}
\left\vert \P\left((Z_n^x,\ldots,Z_{n+k}^x)\in B\right)-\P\left((Z_n^y,\ldots,Z_{n+k}^y)\in B\right)\right\vert
&\leq& \P\left((U_n^{x,y},\ldots,U_{n+k}^{x,y})\neq (V_n^{x,y},\ldots,V_{n+k}^{x,y})\right)\\
&\leq & \P\left(T_n^{x,y}\leq k\right).
\end{eqnarray*}
Proposition \ref{central} implies that  $\P\left(T_n^{x,y}\leq k\right)\leq \P\left(S_n^{(\gamma)}\leq k\right)$.
The result of the corollary now follows  by bounding $\P\left(S_n^{(\gamma)}\leq k\right)$ along the lines of the  derivation of
\citet[eq. (4.25)]{Bres}.
\end{proof}

As pointed out in \citet{Bres}, if $\lim_{n\rightarrow \infty}\gamma_n^{*}=0$, then Corollary \ref{central2} implies  existence and uniqueness
of a stationary chain $(Z_n)_{n\in\Z}$ with complete connections and satisfying Assumption \textbf{(A)}.
Furthermore, Corollary \ref{central2} yields a bound for controlling  the $\phi-$mixing coefficients associated with
$\left(Z_n\right)_{n\in\Z}$.
Indeed,
recall  that for two $\sigma-$algebras $\mathcal{A}$ and $\mathcal{B}$, their $\phi-$mixing coefficients are  defined by (see \citet{Doukhan(1994)},
for instance)
$$
\phi\left(\mathcal{A},\mathcal{B}\right)=\sup_{(A,B)\in \mathcal{A}\times\mathcal{B}: \P(A)>0}\Bigl(
\vert \P\left(B\vert A\right)-\P(B)\vert\Bigr).
$$
Then, the  $\phi$-mixing coefficients of the random sequence $\left(Z_n\right)_{n\in\Z}$ are given by
$$
\phi(n)=\phi\left(\mathcal{F}_{-\infty,0},\mathcal{F}_{n,\infty}\right)=\sup_{k\in\N}\phi\left(\mathcal{F}_{-\infty,0},\mathcal{F}_{n,n+k}\right),
$$
because a Borel set on the infinite product can be approximated by a finite union of cylinder sets.
\begin{prop}\label{cons}
Suppose  that $\sum_{n\geq 1}\gamma^{*}_n<\infty$. Then the infinite order stationary  Markov chain
$(Z_n)_{n\in\Z}$, which exists by Corollary  \ref{central2},  is $\phi-$mixing with mixing coefficients satisfying
$\phi(n)\leq \sum_{j\geq n}\gamma^{*}_j$.
\end{prop}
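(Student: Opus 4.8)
The plan is to estimate $\phi(n)=\sup_{k\in\N}\phi(\mathcal{F}_{-\infty,0},\mathcal{F}_{n,n+k})$ directly, following the standard route for Markov-type chains: the $\phi$-mixing rate is controlled by how fast, uniformly over the conditioning past, the law of a finite forward window forgets that past, and this is exactly what Corollary \ref{central2} quantifies. Fix $n,k\geq 1$, a set $A\in\mathcal{F}_{-\infty,0}$ with $\P(A)>0$, and $B=\{(Z_n,\dots,Z_{n+k})\in B'\}$ with $B'\in\mathcal{B}(E^{k+1})$; since cylinders generate $\mathcal{F}_{n,\infty}$, controlling such $B$ suffices. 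Write $X=(Z_0,Z_{-1},\dots)$ for the past and $\mu$ for its stationary law on $E^{\N}$, so that $\mathcal{F}_{-\infty,0}=\sigma(X)$. By stationarity and the construction of $(Z_m)_{m\in\Z}$ from the kernel $p(\cdot\mid\cdot)$, the regular conditional distribution of the forward window given the past is a measurable function of $X$ alone that coincides with the law of the coupled chains: there is a measurable $h\colon E^{\N}\to[0,1]$, $h(x)=\P\big((Z_n^x,\dots,Z_{n+k}^x)\in B'\big)$, with $\P(B\mid\mathcal{F}_{-\infty,0})=h(X)$ a.s., and hence $\P(B)=\int h\,d\mu$.

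From here the estimate is routine. Since $\E[\mathds{1}_A]/\P(A)=1$,
\[
\P(B\mid A)-\P(B)=\frac{1}{\P(A)}\,\E\!\left[\mathds{1}_A\Big(h(X)-\int h\,d\mu\Big)\right],
\]
and because $\big|h(X)-\int h\,d\mu\big|\leq\int|h(X)-h(x')|\,\mu(dx')\leq\sup_{x,x'\in E^{\N}}|h(x)-h(x')|$ pointwise, one gets $\big|\P(B\mid A)-\P(B)\big|\leq\sup_{x,x'}|h(x)-h(x')|$. Now apply Corollary \ref{central2} and use $0\leq\gamma_m<1$ to discard the product factors:
\[
\sup_{x,x'}|h(x)-h(x')|\leq\sum_{j=0}^{k}\Big(\prod_{m=0}^{j-1}(1-\gamma_m)\Big)\gamma^{*}_{n+k-j}\leq\sum_{j=0}^{k}\gamma^{*}_{n+k-j}=\sum_{i=n}^{n+k}\gamma^{*}_i\leq\sum_{i\geq n}\gamma^{*}_i,
\]
the last step using the hypothesis $\sum_{i\geq 1}\gamma^{*}_i<\infty$. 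Taking the supremum over $A$, $B$ and $k$ yields $\phi(n)\leq\sum_{i\geq n}\gamma^{*}_i$, a tail which tends to $0$; hence $(Z_m)_{m\in\Z}$ is $\phi$-mixing with the claimed rate.

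The step that genuinely requires care is the identification $\P(B\mid\mathcal{F}_{-\infty,0})=h(X)$, i.e.\ that conditionally on its past the stationary chain produced by Corollary \ref{central2} evolves forward exactly like the kernel-driven chains $(Z_m^x)_m$; this follows from the way that chain is obtained (as the unique stationary solution, realized as an a.s.\ limit of the coupled constructions), but it deserves to be stated explicitly. The rest is bookkeeping: measurability of $h$, and aligning the time indices of $(Z_n,\dots,Z_{n+k})$ with those appearing in Corollary \ref{central2} so that the bound comes out as the tail $\sum_{i\geq n}\gamma^{*}_i$ rather than a shifted version of it.
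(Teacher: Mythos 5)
Your proof is correct and follows essentially the same route as the paper: both use Corollary \ref{central2} to bound the oscillation of $x\mapsto \P\bigl((Z_n^x,\ldots,Z_{n+k}^x)\in B'\bigr)$ uniformly over the past, identify this function (integrated against, or conditioned on, the law $\mu$ of the past) with $\P(B\mid\mathcal{F}_{-\infty,0})$, and then discard the factors $\prod_{m}(1-\gamma_m)\leq 1$ to reduce the bound to the tail sum $\sum_{i\geq n}\gamma^{*}_i$. The only difference is cosmetic — you phrase the key estimate via a supremum over pairs of pasts while the paper integrates over $\mu(dy)$ — and you are in fact slightly more explicit than the paper about the identification $\P(B\mid\mathcal{F}_{-\infty,0})=h(X)$, which the paper takes for granted.
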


\begin{proof}
Suppose that  $\mu$ denotes the probability distribution of $(Z_i)_{i\leq -1}$. Then
\begin{eqnarray*}
\left\vert \P\left((Z_n^x,\ldots,Z_{n+k}^x)\in B\right)-\P\left((Z_n,\ldots,Z_{n+k})\in B\right)\right\vert
& \leq &   \int \left\vert \P\left((Z_n^x,\ldots,Z_{n+k}^x)\in B\right)-\P\left((Z_n^y,\ldots,Z_{n+k}^y)\in B\right)\right\vert \mu(dy)\\
& \leq &    \sum_{j=0}^k \left(\prod_{m=0}^{j-1}\left(1-\gamma_m\right)\right)\gamma^{*}_{n+k-j}.
\end{eqnarray*}
But the last bound does not depend on $x$. Hence, we obtain
\begin{eqnarray*}
\left\vert \P\left((Z_n,\ldots,Z_{n+k})\in B\vert \mathcal{F}_{-\infty,0}\right)-\P\left((Z_n,\ldots,Z_{n+k})\in B\right)\right\vert
&\leq&
\sum_{j=0}^k \left(\prod_{m=0}^{j-1}\left(1-\gamma_m\right)\right)\gamma^{*}_{n+k-j}\\
&\leq& \sum_{j\geq n}\gamma^{*}_j.
\end{eqnarray*}
The last bound, which does not depend on $k$ and $B$, is also an upper bound for $\phi(n)$.
\end{proof}

\begin{Remark} \rm
\label{decay}
It has been shown in  \citet[Prop. 2]{Bres}  that
$$\sum_{k\geq 1}\gamma_k<\infty\Rightarrow \sum_{k\geq 1}\gamma^{*}_k<\infty.$$
Moreover, if $(\gamma_m)_{m}$ decreases exponentially, then so does $\left(\gamma_n^{*}\right)_n$. Hence, the result of
Prop. \ref{cons} follows again and if $(\gamma_m)_{m}$ decreases to zero exponentially fast then so does $(\phi(n))_{n}$.
Note also that the $\phi-$mixing property implies  ergodicity of the process; see \citet[pp. 50-51]{Brad}.
\end{Remark}

\subsection{Application to categorical time series}

Recall the categorical time series model   $(Y_{t})_{t \in \Z}$  whose state space is $E=\left\{e_1,\ldots,e_{N-1},{\bf 0}\right\}$ and
defined by \eqref{def} and \eqref{cool1}.
From the results of the previous subsection, we deduce the following corollary.
($\Vert\cdot\Vert$ denotes  the Euclidian norm on $\R^{N-1}$.)

\begin{cor}\label{appli1}
Assume model \eqref{cool1} and let a function
$g:E^{\N}\rightarrow \R^{N-1}$ be
such that there exist a sequence $(\delta_j)_{j\in\N}$ which satisfies
$\sum_{j\in\N}\sum_{k \geq j} \delta_k<\infty$ and
\begin{equation}\label{lip}
\Vert g(x)-g(y)\Vert \leq \sum_{j\in\N} \delta_j \mathds{1}_{x_j\neq y_j}.
\end{equation}
Then, there exists a unique stochastic process $(Y_t)_{t\in \Z}$ taking values in $E$ such that
\begin{equation}\label{fifi}
\P\left(Y_t= e_j \vert \mathcal{F}_{t-1}\right)=
\frac{\exp\left(g_j\left(Y_{t-1},Y_{t-2},\ldots\right)\right)}{1+\sum_{s=1}^{N-1}\exp\left(g_s\left(Y_{t-1},Y_{t-2},\ldots\right)\right)},~~ 1\leq j\leq N-1,
\end{equation}
where $g_{j}(\cdot)$ is the $j$'th component of $g$.
Moreover $(Y_t)_{t\in\Z}$ is stationary and $\phi-$mixing.
\end{cor}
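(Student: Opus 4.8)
The plan is to realize the categorical process $(Y_t)_{t\in\Z}$ as (a relabeling of) the chain with complete connections studied in the previous subsection, and then quote Corollary \ref{central2}, Proposition \ref{cons}, and Remark \ref{decay}. First I would define the probability kernel $p(\cdot\mid\cdot)$ on $E^{\N}$ by
\begin{equation*}
p(e_j\mid x)=\frac{\exp\bigl(g_j(x)\bigr)}{1+\sum_{s=1}^{N-1}\exp\bigl(g_s(x)\bigr)},\quad 1\le j\le N-1,\qquad p({\bf 0}\mid x)=\frac{1}{1+\sum_{s=1}^{N-1}\exp\bigl(g_s(x)\bigr)},
\end{equation*}
so that a stationary process satisfying \eqref{def}–\eqref{cool1} is exactly a stationary chain with kernel $p$ in the sense of the subsection (with the past $Y_{t-1}^-$ playing the role of $x$). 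The substantive step is to check that \eqref{lip} forces Assumption \textbf{(A)}, i.e. to produce a summable null sequence $(\gamma_m)$ with $\gamma_0<1$ and $\inf_{x\stackrel m= y} p(a\mid x)/p(a\mid y)\ge 1-\gamma_m$ for every $a\in E$.

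For that step I would argue as follows. If $x\stackrel m= y$ then $\mathds1_{x_j\neq y_j}=0$ for $j<m$, so \eqref{lip} gives $\Vert g(x)-g(y)\Vert\le \sum_{j\ge m}\delta_j=:\Delta_m$, hence $|g_s(x)-g_s(y)|\le\Delta_m$ for every coordinate $s$. Writing $p(a\mid x)$ as a ratio of exponentials, the quotient $p(a\mid x)/p(a\mid y)$ is a product/quotient of factors of the form $\exp\bigl(g_s(x)-g_s(y)\bigr)$ and $\bigl(1+\sum_s e^{g_s(y)}\bigr)/\bigl(1+\sum_s e^{g_s(x)}\bigr)$; a direct estimate (for $a=e_j$ use $e^{g_j(x)-g_j(y)}\ge e^{-\Delta_m}$ in the numerator and bound the denominator ratio from below by $e^{-\Delta_m}$, and similarly — more simply — for $a={\bf 0}$) yields
\begin{equation*}
\frac{p(a\mid x)}{p(a\mid y)}\ge e^{-2\Delta_m}\ \ (\text{or }e^{-\Delta_m}\text{ for }a={\bf 0}),
\end{equation*}
so one may take $\gamma_m=1-e^{-2\Delta_m}$. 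Since $\Delta_m\to0$ we get $\gamma_m\to0$; and because $\Delta_m=\sum_{j\ge m}\delta_j$, the hypothesis $\sum_j\sum_{k\ge j}\delta_k<\infty$ is precisely $\sum_m\Delta_m<\infty$, which together with $1-e^{-2\Delta_m}\le 2\Delta_m$ gives $\sum_m\gamma_m<\infty$. Replacing $\delta_0$ if necessary (or noting $\Delta_0<\infty$ and rescaling) ensures $\gamma_0<1$; strictly, \eqref{lip} only needs adjusting by the trivial observation that $\gamma_0$ can be taken as any value $<1$ once $\Delta_0$ is finite, because $(\gamma_m)$ merely needs to decrease to zero with $\gamma_0<1$ — if $2\Delta_0\ge\log 2$ one simply truncates, setting $\gamma_0$ to any number in $(\sup_{m\ge1}\gamma_m,1)$, which is admissible since the bound $p(a\mid x)/p(a\mid y)\ge 0\ge 1-\gamma_0$ holds vacuously at $m=0$.

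Once Assumption \textbf{(A)} holds with this summable $(\gamma_m)$, Remark \ref{decay} (citing \citet[Prop. 2]{Bres}) gives $\sum_k\gamma_k^*<\infty$, hence in particular $\gamma_n^*\to0$, so Corollary \ref{central2} supplies existence and uniqueness of the stationary chain $(Z_n)_{n\in\Z}$ with kernel $p$; transporting back through the coordinate bijection $E\leftrightarrow\{1,\dots,N\}$ gives the desired $(Y_t)_{t\in\Z}$ satisfying \eqref{fifi}, and uniqueness in law follows from the uniqueness part of Corollary \ref{central2}. Then Proposition \ref{cons} gives $\phi$-mixing with $\phi(n)\le\sum_{j\ge n}\gamma_j^*$, and the final sentence of Remark \ref{decay} (that $\phi$-mixing implies ergodicity) completes the proof. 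The only real obstacle is the contraction estimate of the middle paragraph — turning the additive Lipschitz bound \eqref{lip} on $g$ into the multiplicative lower bound on the kernel ratio — but this is an elementary manipulation of the logistic link, so I expect no genuine difficulty; one should just be slightly careful about the $\gamma_0<1$ normalization.
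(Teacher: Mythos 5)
Your proof is correct and follows essentially the same route as the paper: reduce to Assumption \textbf{(A)} by turning the Lipschitz condition on $g$ into a lower bound on the kernel ratio, then invoke Remark \ref{decay}, Corollary \ref{central2} and Proposition \ref{cons}. The paper obtains the ratio bound $1-M\sum_{j\ge m+1}\delta_j/\eta$ via a bound on $F_j'$ combined with the uniform lower bound $\eta$ on the kernel, whereas your direct exponential estimate $p(a\mid x)/p(a\mid y)\ge e^{-2\Delta_m}$ is an equally valid, slightly cleaner variant. One small correction: your claim that the $m=0$ bound ``holds vacuously'' because $p(a\mid x)/p(a\mid y)\ge 0\ge 1-\gamma_0$ is false whenever $\gamma_0<1$; fortunately the entire truncation discussion is unnecessary, since $\Delta_0=\sum_{j}\delta_j<\infty$ already gives $\gamma_0=1-e^{-2\Delta_0}<1$ directly.
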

\begin{proof}

Denote by   $p(\cdot \vert \cdot)$  the probability kernel defined by
$$p(e_j \vert x)=F_j\left[g\left(x_0,x_1,\ldots\right)\right],~~1\leq j \leq N-1,$$
where $F_j:\R^{N-1}\rightarrow [0,1]$ is defined  for $z \in \R^{N-1}$ by
$$F_j(z)=\frac{\exp(z_j)}{1+\sum_{s=1}^{N-1}\exp(z_s)},\quad 1\leq j\leq N-1.$$
Because of \eqref{def}, $F_N(z)=\left(1+\sum_{s=1}^{N-1}\exp(z_s)\right)^{-1}$.
The Lipschitz assumption \eqref{lip} implies that the $j$'th component of $g$  is
bounded, for $j=1,2\ldots,N-1$. Hence, there  exists $\eta>0$ such that, for all $1\leq j\leq N-1$ and $x\in E^{\N}$,
$$\eta\leq p(e_j\vert x),\quad \eta\leq p({\bf 0} \vert x).$$
Moreover, $F'_j$ is bounded, for all $j$.  Set  $M=\max_{1\leq j\leq N-1}\sup_{z\in\R^{N-1}}\Vert F'_j(z)\Vert$.
Then, if $x\stackrel{m}{=}y$ and $a\in E$, we have that
$$\frac{p(a\vert x)}{p(a\vert y)}\geq 1-\frac{M\sum_{j\geq m+1}\delta_j}{\eta}.$$

Provided that  $m$ is  large enough,  choose $\gamma_m=M\sum_{j\geq m+1}\delta_j/\eta$. Hence, there exists  an  $m$ such that  $\gamma_m=1-\eta$.
Then we have $\sum_{k\geq 1}\gamma_k<\infty$ and using Remark \ref{decay}, we have also $\sum_{k\geq 1}\gamma_k^{*}<\infty$.
Then from Corollary \ref{central2} and Proposition \ref{cons}, there exists a unique stationary solution $(Y_t)_{t\in\Z}$ satisfying (\ref{fifi}) and the solution is $\phi-$mixing.
\end{proof}

We note that  the condition $\sum_{j\in \N}\sum_{k\geq j}\delta_k<\infty$ is equivalent to the condition
$\sum_{j\in\N}j\delta_j<\infty$. For the \emph{infinite order}  linear model \eqref{eq:linear no feedback}, Corollary \ref{appli1} applies
provided that $\sum_{j\geq 1}j\Vert A_j\Vert<\infty$ where $\Vert A_j\Vert$ denotes the corresponding operator norm of the matrix $A_j$.
In particular, when $N=2$, we obtain that the logistic autoregressive model of infinite order is stationary and $\phi$-mixing if
$\sum_{j\geq 1}j |A_j| <\infty$, where $(A_{j})_{j \geq 1}$ denotes a real valued sequence.
For the general non linear model defined in (\ref{cool1}) but of finite-order, i.e. $g$ only depends on finitely many $y_j'$s, note that the assumptions of Corollary \ref{appli1} are automatically satisfied because the number of coefficients $\delta_j$ is also finite.

\section{Categorical time series with a latent process}
\label{sec:Latent AR}

In this section, we consider some  specific instances  of chains with complete connections. Following the methodology of GARCH models
(see \citet{Engle(1982)}, \citet{Bollerslev(1986)} and the text by \citet{FrancqandZakoian(2010)} for instance), and recalling the notation introduced in \eqref{cool1}
we model  the latent process $(\lambda_t)_{t\in\Z}$ to depend additionally on its past values.
From a  statistical perspective, such parametrization  yields parsimony and allows for more flexible structures that can accommodate
various forms of autocorrelation.
To be specific, suppose that  $p$ and $q$  are two positive integers and Let $f:\R^{(N-1)p}\times E^q\rightarrow \R^{N-1}$ be a function
such that
\begin{equation}\label{cool}
\lambda_t=f\left(\lambda_{t-1},\ldots,\lambda_{t-p},Y_{t-1},\ldots,Y_{t-q}\right),\quad t\in\Z.
\end{equation}
We  will say that the process $\left((Y_t,\lambda_t)\right)_{t\in\Z}$ is a solution of the problem $\mathcal{P}_f$
if (\ref{cool}) is satisfied and for each $t\in\Z$, $\lambda_t$ is $\mathcal{F}_{t-1}-$measurable.

\subsection{A general result}

For $\underline{y}\in E^q$, define the mapping   $G_y:\R^{(N-1)p}\rightarrow \R^{(N-1)p}$
by
$$G_{\underline{y}}(\underline{x})=\left(f(x_1,\ldots,x_p,y_1,\ldots,y_q)',x_1',\ldots,x_{p-1}'\right)',$$
where $f(\cdot)$ has been defined by \eqref{cool}.  The main result of this section is the following.

\begin{theo}\label{main}
Suppose that there exist an integer $k\geq 1$, $\kappa\in (0,1)$ and $K>0$ such that
$$\Vert G_{\underline{y}}(\underline{x})-G_{\underline{y}'}(\underline{x}')\Vert\leq K\left(\Vert \underline{x}-\underline{x}'\Vert+\mathds{1}_{\underline{y}\neq \underline{y}'}\right),$$
and for all $\underline{x},\underline{x}',\underline{y}_1,\ldots,\underline{y}_k$,
\begin{equation}\label{contrac}
\Vert G_{\underline{y}_1}\circ\cdots\circ G_{\underline{y}_k}(\underline{x})-G_{\underline{y}_1}\circ\cdots\circ G_{\underline{y}_k}(\underline{x}')\Vert\leq \kappa \Vert \underline{x}-\underline{x}'\Vert.
\end{equation}
Then, the following hold true:
\begin{enumerate}
\item
Let $\underline{x}$ be a vector of $\R^{(N-1)p}$
and $\left(\underline{y}_j\right)_{j\geq 1}$ a sequence of elements of $E^q$.
Then the limit
$$\lim_{s\rightarrow \infty}G_{\underline{y}_1}\circ\cdots\circ G_{\underline{y}_s}(x)$$
exists and
does not depend on $x$. Let $H:(E^q)^{\N}\rightarrow \R^{(N-1)p}$ be the function defined by
$$H\left(\underline{y}_1,\underline{y}_2,\ldots\right)=\lim_{s\rightarrow \infty}G_{\underline{y}_1}\circ\cdots\circ G_{\underline{y}_s}(x).$$
Then the function $H$ is bounded. Moreover there exist $C>0$ such that
$$\Vert H\left(\underline{y}_1,\underline{y}_2,\ldots\right)-H\left(\underline{y}'_1,\underline{y}'_2,\ldots\right)\Vert\leq C\sum_{j\geq 1}\kappa^{j/k}\mathds{1}_{\underline{y}_j\neq \underline{y}_j'},$$
where $\kappa$ is defined by (\ref{contrac}).

\item
A process $\left((Y_t,\lambda_t)\right)_{t\in\Z}$ is solution of the problem $\mathcal{P}_f$ is and only if $(Y_t)_{t\in\Z}$ is a chain with complete connection associated to a function $g$ (see Corollary \ref{appli1}) defined by
$$g\left(Y_{t-1},Y_{t-2},\ldots\right)=H_1\left(V_t,V_{t-1},\ldots\right),\quad V_t=(Y_{t-1},\ldots,Y_{t-q}).$$
Here $H_1$ denotes the $N-1$ first coordinates  of the function $H$ defined previously.
\item
There exists a unique strictly stationary solution to the equations (\ref{def}) and (\ref{cool}). Moreover the process
$\left(Y_t\right)_{t\in \Z}$ is $\phi-$mixing with geometrically  decreasing   mixing coefficients. This implies  the ergodicity of
the joint process $\left(\left(Y_t,\lambda_t\right)\right)_{t\in\Z}$.
\end{enumerate}
\end{theo}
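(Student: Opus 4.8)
The plan is to establish the three parts in order, since each relies on the previous one, and then invoke the machinery of Corollary~\ref{appli1} and Proposition~\ref{cons} for the conclusion. For Part~(1), I would first show that the contraction hypothesis \eqref{contrac} forces the iterated maps $G_{\underline{y}_1}\circ\cdots\circ G_{\underline{y}_s}(\underline{x})$ to form a Cauchy sequence in $s$, uniformly in $\underline{x}$. The idea: write $s = mk + r$, group the composition into blocks of $k$ maps, and apply \eqref{contrac} $m$ times to get a factor $\kappa^m \sim \kappa^{s/k}$ in front of the difference of two tails; the one-step Lipschitz bound with constant $K$ controls the leftover $r<k$ maps and the ``difference of starting points'' term, which stays bounded because $E^q$ is finite (so $\mathds{1}_{\underline{y}\neq\underline{y}'}\le 1$ always). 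This gives existence of the limit $H$, its independence of $x$, and its boundedness (take $\underline{x}=\underline{x}'$ a fixed reference point and compare with the constant sequence). For the Lipschitz-type bound on $H$, I would compare $H(\underline{y}_1,\underline{y}_2,\ldots)$ and $H(\underline{y}_1',\underline{y}_2',\ldots)$ by a telescoping argument over the coordinates where the two sequences differ: changing the $j$-th coordinate propagates through $G_{\underline{y}_1}\circ\cdots\circ G_{\underline{y}_{j-1}}$, which by the block-contraction estimate has Lipschitz constant $O(\kappa^{(j-1)/k})$, and each single change contributes at most $K\,\mathds{1}_{\underline{y}_j\neq\underline{y}_j'}$ at its own stage; summing gives $C\sum_{j\ge 1}\kappa^{j/k}\mathds{1}_{\underline{y}_j\neq\underline{y}_j'}$.

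For Part~(2), the ``only if'' direction: given a solution $((Y_t,\lambda_t))$ of $\mathcal P_f$, I would iterate the recursion \eqref{cool} backwards. Writing $\underline{\lambda}_t = (\lambda_t,\ldots,\lambda_{t-p+1})$, the relation \eqref{cool} says $\underline{\lambda}_t = G_{V_t}(\underline{\lambda}_{t-1})$ with $V_t=(Y_{t-1},\ldots,Y_{t-q})$, so $\underline{\lambda}_t = G_{V_t}\circ G_{V_{t-1}}\circ\cdots\circ G_{V_{t-s+1}}(\underline{\lambda}_{t-s})$ for every $s\ge 1$; letting $s\to\infty$ and using Part~(1) (the limit exists and does not depend on the starting point $\underline{\lambda}_{t-s}$) yields $\underline{\lambda}_t = H(V_t,V_{t-1},\ldots)$, hence $\lambda_t = H_1(V_t,V_{t-1},\ldots)$, which is exactly $g(Y_{t-1},Y_{t-2},\ldots)$ since $V_t$ is a measurable function of the past of $Y$. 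The $\mathcal F_{t-1}$-measurability of $\lambda_t$ is then automatic. Conversely, if $(Y_t)$ is the chain with complete connection associated to that $g$, one simply \emph{defines} $\lambda_t$ via the same formula and checks \eqref{cool} holds by the defining property of $H$ (namely $G_{\underline{y}_0}(H(\underline{y}_1,\underline{y}_2,\ldots)) = H(\underline{y}_0,\underline{y}_1,\ldots)$, which follows from continuity of $G_{\underline{y}_0}$ and the definition of $H$ as a limit).

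For Part~(3), I would verify that the function $g$ produced in Part~(2) satisfies the hypotheses of Corollary~\ref{appli1}. Each coordinate $V_j=(Y_{j-1},\ldots,Y_{j-q})$ of the argument of $H$ depends on a block of $q$ consecutive $Y$'s, so a mismatch $Y_i\neq Y_i'$ affects at most $q$ of the blocks $V_j$; translating the bound from Part~(1) into the $Y$-coordinates, one gets $\Vert g(x)-g(y)\Vert \le C'\sum_{i}\kappa^{i/(kq)}\,\mathds{1}_{x_i\neq y_i}$ (up to harmless shifts in the exponent), i.e. \eqref{lip} holds with $\delta_i$ decaying geometrically. Then $\sum_i i\,\delta_i<\infty$, Corollary~\ref{appli1} gives a unique stationary $\phi$-mixing $(Y_t)$, and since the mixing coefficients inherit the geometric decay of $(\gamma_m)$ via Remark~\ref{decay}, they decrease geometrically. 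Finally $\lambda_t = H_1(V_t,V_{t-1},\ldots)$ is a fixed measurable function of $(Y_s)_{s\le t-1}$, so $((Y_t,\lambda_t))$ is a measurable image of a shift of the ergodic process $(Y_t)$, hence ergodic.

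The main obstacle I anticipate is the bookkeeping in Part~(1): getting the geometric factor $\kappa^{j/k}$ cleanly requires carefully handling the non-multiple-of-$k$ remainders in the compositions and making sure the constant $C$ is uniform, and one must be a little careful that \eqref{contrac} is stated only for compositions of exactly $k$ maps, so intermediate estimates must always be bundled into full blocks of $k$ with the one-step $K$-Lipschitz bound absorbing the rest. Everything after that — Parts~(2) and~(3) — is essentially translation between coordinate systems and an appeal to the already-established Corollary~\ref{appli1} and Proposition~\ref{cons}.
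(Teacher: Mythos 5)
Your proposal is correct and follows essentially the same route as the paper: a block-of-$k$ contraction/Cauchy argument plus telescoping for the Lipschitz bound on $H$ in Part (1), backward iteration and continuity of $G_{V_t}$ for the equivalence in Part (2), and verification of the hypothesis of Corollary \ref{appli1} (with the block-to-coordinate translation $V_j\mapsto Y_i$) for Part (3). The only differences are presentational — you telescope over individual coordinates where the paper telescopes over $k$-blocks, and you spell out steps the paper declares "straightforward" — and your exponent $\kappa^{i/(kq)}$ in Part (3) is a harmless weakening that still yields geometric decay.
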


\begin{proof}

\begin{enumerate}
\item The first part of the assertion is a straightforward consequence of the assumption  and is omitted. We focus on
the proof of the Lipschitz property of the function $H$. For $j\geq 1$, we set
$$G^{(j)}_{\underline{y}}=G_{\underline{y}_{(j-1)k+1}}\circ G_{\underline{y}_{(j-1)k+2}}\circ\cdots\circ G_{\underline{y}_{jk}}.$$
We have
$$H\left(\underline{y}_1,\underline{y}_2,\ldots\right)=\lim_{s\rightarrow\infty}G^{(1)}_{\underline{y}}\circ\cdots\circ G^{(s)}_{\underline{y}}(x).$$
By the stated assumption, we obtain that
$$\Vert G^{(j)}_{\underline{y}}(x)-G^{(j)}_{\underline{y}'}(x)\Vert\leq \sum_{\ell=1}^kK^{\ell}\mathds{1}_{\underline{y}_{(j-1)k+\ell}\neq \underline{y}'_{(j-1)k+\ell}}.$$
Hence
$$\Vert G^{(1)}_{\underline{y}}\circ\cdots\circ G^{(s)}_{\underline{y}}(x)-G^{(1)}_{\underline{y}'}\circ\cdots\circ G^{(s)}_{\underline{y}'}(x)\Vert\leq \sum_{j=1}^{\infty}\kappa^{j}\sum_{\ell=1}^kK^{\ell}\mathds{1}_{\underline{y}_{(j-1)k+\ell}\neq \underline{y}'_{(j-1)k+\ell}}.$$
By setting $C=\left(K\vee 1\right)^k$ and letting $x\rightarrow\infty$ we obtain the result.

\item
From the first point of the theorem, the necessary condition follows easily. Now let us assume that
$\lambda_t=H_1\left(V_t,V_{t-1},\ldots\right)$. Setting $\underline{\lambda}_t=H\left(V_t,V_{t-1},\ldots\right)$,
note that the continuity of the function $G_{V_t}$ implies  that
\begin{eqnarray*}
\underline{\lambda}_t&=&\lim_{s\rightarrow \infty}G_{V_t}\circ G_{V_{t-1}}\circ\cdots G_{V_{t-s}}(x)\\
&=& G_{V_t}\left(\lim_{s\rightarrow \infty}G_{V_{t-1}}\circ\cdots\circ G_{V_{t-s}}(x)\right)\\
&=& G_{V_t}\left(\underline{\lambda}_t\right),
\end{eqnarray*}
which proofs  that $(\lambda_t)_{t\in\Z}$ satisfies (\ref{cool}).
\item
The third point is a straightforward consequence of the two first results  and of Corollary \ref{appli1}.
Moreover the geometric decay of the $\phi-$mixing coefficients has been discussed in the remarks made  following Proposition \ref{cons}.
Finally, it is well-known that $\phi-$mixing implies  ergodicity of the process $(Y_t)_{t\in\Z}$ and then ergodicity of the process
$\left((\lambda_t,Y_t)\right)_{t\in\Z}$; see \citet[Ch. 2]{Samo}, for instance.
\end{enumerate}

\end{proof}

\subsection{Linear models}
Let $A_0,A_1,\ldots,A_p,B_1,\ldots,B_q$ be some real matrices of size $(N-1)\times (N-1)$. We assume
that
$$f\left(x_1,\ldots,x_p;y_1,\ldots,y_q\right)=A_{0}+ \sum_{i=1}^p A_ix_i+\sum_{i=1}^q B_i y_i.$$
Then the above model can be written alternatively as
$$G_{V_t}\left(\underline{x}\right)=\widetilde{A}
\underline{x}+B,$$
with
$$\widetilde{A}=\left(\begin{array}{cc}\begin{array}{ccc}A_1&\cdots&A_{p-1}\end{array}& A_p\\ I_{(N-1)(p-1)} & 0\end{array}\right),\quad B=\begin{pmatrix} A_0+\sum_{i=1}^q B_i Y_{t-i}\\0\\\vdots\\0\end{pmatrix},$$
where $I_{(N-1)(p-1)}$ denotes the identity matrix of order $(N-1)(p-1)$. Then, the assumptions of Theorem \ref{main} are satisfied if the spectral radius of $\widetilde{A}$ is less than unity (and then the norm of $\widetilde{A}^k$ is less than one if $k$ is large enough) which also implies  that
the roots of the polynomial $\mathcal{P}(z)=\det\left(I_{N-1}-\sum_{i=1}^p A_iz^i\right)$ are outside the unit disc; \citet[Ch.2]{Lut}.
For the case $p=q=1$, this result improves the conditions proved by \citet{Fok1}
since it does not require any additional assumption for the coefficient $B_{1}$. In fact, we
reconfirm and generalize the condition obtained  by \citet{Tjostheim2012} for the case of binary logistic autoregressive model when  $p=q=1$.
Compared with the work of \citet{Fok2} we note again that for
the case of logistic autoregressive modeling with binary data, these  conditions simplify  their findings.

\subsection{Non-linear models}

We discuss now the case of non-linear models. For example,
a non-linear model that can be used for binary time series  is a threshold type model. For a binary time series,
we  can define with some abuse of notation,
the log-odds ratio by
$$\lambda_t=d+\beta_1 \lambda_{t-1}^{+}+\beta_2\lambda_{t-1}^{-}+\alpha Y_{t-1},$$
where $x^{+}=max(x,0)$, $x^{-}=min(x,0)$ and $\max\left\{\vert\beta_1\vert,\vert\beta_2\vert\right\}<1$.
Such a model implies  higher/lower probability of the occurrence of  state $1$, depending on the sign of the lagged  value of the latent process.
Other models that can be introduced along the previous lines for binary time series are the so called
smooth autoregressive models as advocated by \citet{Terasvirta(1994)}.
Such models will allow for smooth transitions between states.

Recall \eqref{cool} and assume  that there exists a norm $\Vert\cdot\Vert$ on $\R^{N-1}$
and some positive real numbers $\beta_1,\ldots,\beta_p,\alpha_1,\ldots,\alpha_q$ with
$\alpha=\sum_{i=1}^p\alpha_i<1$ and for all $x_1,x_1',\ldots,x_p,x'_p,y_1,y_1',\ldots,y_q,y'_q\in \R^{N-1}$,
$$\Vert f\left(x_1,\ldots,x_p;y_1,\ldots,y_q\right)-f\left(x'_1,\ldots,x'_p;y'_1,\ldots,y'_q\right)\Vert\leq \sum_{i=1}^p\alpha_i\Vert x_i-x'_i\Vert+\sum_{i=1}^q \beta_i\Vert y_i-y'_i\Vert.$$
It can be proved under this  condition that,  for a process $(W_t)_{t\in\Z}$ defined by $W_{t}=(Y^{T}_{t-1}, \ldots, Y^{T}_{t-q})^{T}$and
 taking values in $\R^{(N-1)q}$, the random mapping
$$G_{W_t}(x)=\left(f(x,W_t),x_1,\ldots,x_{p-1}\right)$$
is contracting, after iteration. Indeed, if $x, x' \in\R^{p}$, $\lambda^x_i=x_i$ for $1\leq i\leq p$ and
$$\lambda_t^x=f\left(\lambda^{x}_{t-1},\ldots,\lambda^{x}_{t-p},W_t\right),\quad t\geq p+1,$$
it follows by induction that
$$\left\Vert \lambda_t^x-\lambda_t^{x'}\right\Vert\leq \alpha^{\frac{t-p}{p}}\Vert x-x'\Vert,\quad t\geq p+1.$$
Hence, there exists an integer $m\geq 1$, such that the mapping
$$H^{(m)}_t(x)=G_{W_t}\circ G_{W_{t-1}}\circ\cdots\circ G_{W_{t-m}}(x)$$
satisfies
$$\left\Vert H^{(m)}_t(x)-H^{(m)}_t(x')\right\Vert\leq \kappa \Vert x-x'\Vert$$
for some $\kappa\in (0,1)$.
Therefore the assumption of Theorem \ref{main} is satisfied. We note again that this condition improves upon
the conditions obtained by \citet{Fok1} and \citet{Fok2} since they require only that $\alpha < 1$.

\section{Inclusion of exogenous covariates for finite-order Markov chain models}
\label{sec:cov}

In this section, we study the problem of including a covariate process $(Z_t)_{t\in\Z}$
in an autoregressive categorical time series model. We will be  assuming
that the covariate process  is  strongly exogenous and unbounded.
We focus on  the case of finite order Markov chains, i.e. the parameter $\lambda_t$ does not depend on its past values.
The general case seems more difficult to tackle and will not be studied in the present paper; see Remark \ref{remarkimp}
for more.

We are not aware of any result which guarantees
ergodicity of a model with covariate even in the simple case of a  finite-state Markov chain. As we will see, there is an interesting parallel
between Markov chains with exogenous covariates and Markov chains in random environments which have been  studied in probability theory.
In the proof of Theorem \ref{goodresult} given below, we will use an approach discussed in \citet{Cog} for showing ergodicity of Markov processes in random environments.

\subsection{A general result for finite state Markov chains with covariates}

We will be discussing  results concerning stationarity and ergodicity for a special case of time inhomogeneous Markov
chain. We will consider  a finite state Markov chain which can be jointly observed with a  covariate process.
In what follows, denote by  $Z=\left(Z_t\right)_{t\in\Z}$ a stationary process with values in a measurable space $\left(G,\mathcal{G}\right)$ and
$(Y_t)_{t\in \N}$ a process which takes  values in a finite set $E$. In addition, conditionally on $Z$, $(Y_t)_{t\in \N}$ is
a finite-state inhomogeneous Markov chain. More precisely, we assume that there exist a family of transition matrices $\left\{P_g:g\in G\right\}$ such that
\begin{equation}\label{cond}
\P\left(Y_t=y\vert Y_{t-1}=x;Z\right)=P_{Z_t}(x,y),\quad (x,y)\in E^2.
\end{equation}
Throughout the section we will assume the following:

\begin{description}
\item[(E1)]
There exists an integer $m\geq 1$ such that for all $(z_1,z_2,\ldots,z_m)\in G^m$, the product of stochastic matrices $P_{z_1}P_{z_2}\cdots P_{z_m}$ has positive entries.

\item[(E2)]
The process $Z$ is mixing in the ergodic theory sense, i.e for all elements $A$ and $B$ of $\mathcal{B}\left(G^{\Z}\right)$, we have
$$\lim_{n\rightarrow \infty}\P\left(Z\in A, \tau^n Z\in B\right)=\P(Z\in A)\P(Z\in B),$$
where $\tau$ denotes the shift operator on $F^{\Z}$ defined by $\tau Z=\left(Z_{j+1}\right)_{j\in \Z}$.
\end{description}

\begin{Remark} \rm

Note that  Assumption ({\bf E1}) implies  that a process $(Y_t)_{t\in\Z}$ satisfying (\ref{cond}) also satisfies
$$\P\left(Y_{t+m}=y\vert Y_t=x,Z\right)>0,\mbox{ a.s }\quad (x,y,t)\in E\times E\times \Z.$$
In addition,  Assumption ({\bf E2}) is  stronger than  assuming  ergodicity of the process $Z$ but weaker than
the classical strong mixing condition usually employed in the literature. A large number  of useful stochastic processes are mixing, for instance the strong mixing processes and
Bernoulli shifts defined by  $Z_t=H\left(\varepsilon_t,\varepsilon_{t-1},\ldots\right)$ where $H$ is a measurable function and $\left(\varepsilon_t\right)_{t\in \Z}$ is an i.i.d sequence.
\citet[Ch.2]{Samo}, discusses several  properties of the different types of mixing in ergodic theory of stationary processes.
Assumption ({\bf E2}) will be employed  for obtaining ergodicity for the shift operator $\tau^m$ which is not implied by the ergodicity of the shift operator $\tau$.
\end{Remark}

\begin{Remark} \rm
Model  (\ref{cond}) implies an  exogeneity assumption for the covariate process $(Z_t)_t$. In particular,
for each time $t$, $Y_t$ is independent from $(Z_s)_{s\geq t+1}$ conditionally to $Y_{t-1},Z_t,Y_{t-2},Z_{t-1}\ldots$
which allows for  simple computation of the conditional likelihood function. Indeed, we have for $y_1,\ldots,y_n\in E$,
$$\P\left(Y_2=y_2,\ldots,Y_n=y_n\vert Y_1=y_1,Z_1,\ldots, Z_n\right)=\prod_{i=1}^nP_{Z_i}(y_{i-1},y_i).$$
This type of exogeneity is also called Granger causality or Sims causality in the literature (see, for instance, \citet[Sec. 1.5.2]{Gour}, for a discussion of these different concepts).
\end{Remark}

\begin{Remark} \rm
We  can assume more generally that
$$\P\left(Y_t=y \vert Z, Y_{t-1}=x\right)=P_{\widetilde{Z}_t}(x,y),\quad \widetilde{Z}_t=\left(Z_{t-j}\right)_{j\geq 0}.$$
But this case is  already covered by assumptions (\textbf{E1}) and (\textbf{E2}).
Indeed, if the process $(Z_t)_{t}$ is assumed to be stationary and mixing, then the process $(\widetilde{Z}_t)_{t}$ taking values in $G^{\Z}$ is also stationary and mixing by using  \citet[Prop. 2.2.4 and Cor. 2.2.5]{Samo}. Hence,
if we assume ({\bf E1}) for the covariate process $(\widetilde{Z}_t)_{t}$ taking values in $G^{\Z}$,  assumption ({\bf E2}) is automatically
satisfied for the covariate process $(\widetilde{Z}_t)_{t}$ provided that $(Z_t)_{t}$ is mixing.
In other words, there is  no loss of generality by assuming that the time-inhomogeneous Markov chain only depends on the coordinate $Z_t$ of the covariate process. This point is important in applications because past values of covariates are routinely included in a regression model.

\end{Remark}

The main result of this section is given in the following theorem.

\begin{theo}\label{goodresult}
Suppose that  ({\bf E1-E2}) hold true. Then  there exists a unique stochastic process $(Y_t)_{t\in\Z}$ satisfying (\ref{cond}).
Moreover the process $\left((Y_t,Z_t)\right)_{t\in\Z}$ is ergodic.
\end{theo}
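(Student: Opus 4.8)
The plan is to construct the stationary chain by a backward-iteration (pullback) coupling argument, exactly as is done for chains with complete connections, but now with the random transition matrices $P_{Z_t}$ playing the role of the kernel. First I would fix a realization of the (two-sided, stationary) covariate process $Z$ and, for each starting time $-n$ and each initial state $x\in E$, let $Y^{(n,x)}_t$ denote the inhomogeneous Markov chain started from $x$ at time $-n$ and run forward according to \eqref{cond}. The key contraction input is Assumption (\textbf{E1}): the product $P_{z_1}\cdots P_{z_m}$ has strictly positive entries, so on any block of $m$ consecutive times the conditional law of $Y$ forgets its past in total variation by a factor $1-\varepsilon$, where $\varepsilon>0$ can be taken uniform in $(z_1,\ldots,z_m)$ — here one uses that $E$ is finite and that a stochastic matrix with all entries positive has a Dobrushin coefficient bounded away from $1$; a short compactness/continuity remark (or simply working with the finitely many relevant matrices after a measurable selection) gives the uniform $\varepsilon$. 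Consequently, for fixed $Z$, the distributions of $\bigl(Y^{(n,x)}_t\bigr)_{t\ge -N}$ over $x$ (and over $n\ge$ some value) form a Cauchy sequence in total variation as $n\to\infty$, at geometric rate $(1-\varepsilon)^{\lfloor (n-N)/m\rfloor}$, uniformly in $x$. This yields, for $Z$-almost every realization, a well-defined limiting law, hence a process $(Y_t)_{t\in\Z}$ measurable with respect to $Z$ (a function of the whole covariate trajectory), satisfying \eqref{cond}; uniqueness follows from the same contraction, since any solution must agree with this pullback limit.

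Next I would address stationarity and, more importantly, ergodicity of the pair $\bigl((Y_t,Z_t)\bigr)_{t\in\Z}$. Stationarity is automatic: the construction is equivariant under the shift, because $Z$ is stationary and the pullback limit commutes with shifting $Z$. For ergodicity I would follow the random-environment approach of \citet{Cog}. The point is that $Y$ is a measurable factor of $Z$: there is a measurable map $\Phi:G^{\Z}\to E^{\Z}$ with $(Y_t)_t=\Phi(Z)$ and $\Phi\circ\tau=\tau\circ\Phi$, where I now write $\tau$ for the shift on both $G^{\Z}$ and $E^{\Z}$. Therefore the law of $\bigl((Y_t,Z_t)\bigr)_t$ is the image under the equivariant map $z\mapsto(\Phi(z),z)$ of the law of $Z$, and a factor of an ergodic system is ergodic. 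So it suffices that the driving system $(G^{\Z},\tau,\mathcal{L}(Z))$ be ergodic. Here is where Assumption (\textbf{E2}) enters in its full strength rather than mere ergodicity: the contraction operates on blocks of length $m$, so the natural "environment" seen by the pullback construction is the $m$-step process, and one needs ergodicity of $\tau^m$, not just of $\tau$. Since (\textbf{E2}) says $Z$ is mixing (in the ergodic-theoretic sense), $\tau^m$ is also mixing — mixing passes to powers trivially, as $\P(Z\in A,\tau^{mn}Z\in B)\to\P(Z\in A)\P(Z\in B)$ — hence ergodic, and this is exactly what makes the factor argument go through at the level of $m$-blocks.

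The step I expect to be the main obstacle is making the pullback limit genuinely measurable and shift-equivariant as a function of the covariate trajectory, and in particular justifying that the almost-sure convergence of the conditional laws (which is what (\textbf{E1}) delivers, fiberwise in $Z$) assembles into a bona fide $E^{\Z}$-valued random variable $\Phi(Z)$ with $\Phi\circ\tau=\tau\circ\Phi$; one must be careful that the exceptional $Z$-null set can be chosen shift-invariant. A clean way I would organize this is: (i) for each finite window and each truncation level, the conditional distribution of the window given $Z$ is a measurable function of $Z$; (ii) the total-variation Cauchy estimate from (\textbf{E1}) holds for every $Z$ (the bound $(1-\varepsilon)^{\lfloor\cdot/m\rfloor}$ is deterministic), so the limit exists for every $Z$ and defines a consistent family of finite-dimensional laws, hence a kernel $Z\mapsto\mathcal{L}(Y\mid Z)$; (iii) realize $Y$ on an enlarged space and check it is in fact $\sigma(Z)$-measurable because the limiting window laws are point masses conditionally — no: more precisely, one argues directly that any two solutions coincide a.s., and then picks the canonical one. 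The remaining pieces — the uniform Dobrushin bound from finiteness of $E$ plus (\textbf{E1}), the passage of mixing to $\tau^m$, and "factor of ergodic is ergodic" — are standard and I would cite \citet[Ch.2]{Samo} for the ergodic-theoretic facts and \citet{Cog} for the random-environment template.
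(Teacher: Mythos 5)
Your overall architecture (backward products of the random transition matrices for existence/uniqueness, then a random-environment argument for ergodicity) is the right one, but two of your key steps are genuinely wrong, and both are exactly the points where the paper has to work harder.

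First, the uniform Dobrushin contraction does not exist. Assumption ({\bf E1}) only says that $P_{z_1}\cdots P_{z_m}$ has positive entries for \emph{each} $(z_1,\ldots,z_m)\in G^m$; it gives no lower bound on those entries uniformly in $z$, and none is available, since $G$ is a general measurable space and the whole point of the theorem is to allow \emph{unbounded} covariates (in the multinomial logistic application the entries of $Q_{Z_t}$ tend to $0$ as $\Vert Z_t\Vert\to\infty$, so $\sup_z c(P_{z_1}\cdots P_{z_m})=1$). Your ``compactness/continuity remark'' therefore fails, and with it the deterministic rate $(1-\varepsilon)^{\lfloor\cdot/m\rfloor}$ on which steps (ii)--(iii) of your outline rest. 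The paper's proof instead uses only that $c\left(P_{Z_t}\cdots P_{Z_{t+m-1}}\right)<1$ a.s., hence $\E\log c\left(P_{Z_1}\cdots P_{Z_m}\right)<0$, and applies the ergodic theorem along the shift $\tau^m$ --- whose ergodicity must first be deduced from ({\bf E2}) --- to conclude that the product of the random contraction coefficients tends to $0$ almost surely. So ({\bf E2}) is already indispensable for existence and uniqueness, not merely for the ergodicity of the pair, and the convergence is only $\P_Z$-a.s., not for every $z$.

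Second, and more seriously, $Y$ is \emph{not} a measurable factor of $Z$: there is no map $\Phi$ with $(Y_t)_t=\Phi(Z)$. Conditionally on $Z$, the process $Y$ is still a genuinely random inhomogeneous Markov chain; what the backward products determine is only the conditional \emph{law} $\mu(z,\cdot)$, and the limiting row $\overline{D}_t=\lim_s P_{Z_{t-s+1}}\cdots P_{Z_t}(x,\cdot)$ is a probability measure on $E$ with (generically) full support, not a point mass --- your own parenthetical ``--- no:'' flags this, but the proposed repair (``any two solutions coincide a.s.'') is equally false; two solutions merely share the same conditional distribution. Consequently ``a factor of an ergodic system is ergodic'' does not apply, and your ergodicity argument collapses. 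The paper's actual route is different: it forms the Markov chain $H_t=(Y_t,\tau^tZ)$ on $E\times G^{\Z}$ with kernel $Q((x,z),\{y\}\times A)=P_{z_1}(x,y)\mathds{1}_A(\tau z)$, shows that a $\nu$-invariant set $C=\cup_{x}\{x\}\times C_x$ satisfies $Q^m\mathds{1}_C=\mathds{1}_C$ a.e., uses the positivity of $P_{z_1}\cdots P_{z_m}$ from ({\bf E1}) to force $(y,\tau^mz)\in C$ for all $y\in E$ whenever $(x,z)\in C$, deduces $\tau^mC_x=C_x$ up to null sets, and finally invokes the $\tau^m$-ergodicity obtained from ({\bf E2}) to conclude $\nu(C)\in\{0,1\}$. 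A factor-type argument could in principle be salvaged by representing $Y$ as an equivariant function of $(Z,U)$ for an independent i.i.d. innovation sequence $U$ via coupling from the past, but that is a different proof and would itself require the almost-sure (not uniform) contraction from the previous paragraph.
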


\begin{Remark}\rm
Developing theory for models (\ref{cool1}) and (\ref{cool}) which  include exogenous covariates
is a more challenging problem, as we explain next.
Indeed, the proof of ergodicity relies heavily  on assumption ({\bf E1})
which asserts the positivity of the conditional density of the model we study.
For simplicity, assume that $p=q=1$ and $N=2$ in (\ref{cool}) with
$$\lambda_t=f\left(\lambda_{t-1},Y_{t-1},Z_t\right).$$
Then, the bivariate process $\left((Y_t,\lambda_t)\right)_t$ is  (conditionally on $Z$)
a time-inhomogeneous Markov chain with transition kernel
$$Q_{Z_t}\left((y,\lambda),\{1\}\times A\right)=F\left(f(\lambda,y,Z_t)\right)\delta_{f(\lambda,y,Z_t)}(A),$$
where for $(a,z)\in \R^2$,
$F(a)={\exp(a)}/({1+\exp(a)})$ and $\delta_z$ denotes the Dirac mass at point $z$.
Therefore, the  transition kernel $Q_{Z_t}$ is  not absolutely continuous with respect
to a measure which depends on $(\lambda ,y)$. When there is no covariate,
the Markov chain does not satisfy an irreducibility assumption and we recover
a classical problem occurring to  observation-driven time series models such as the Poisson GARCH type models;
see for instance \citet{Fok},  \citet{Neumann2011} and \citet{Douc}.
In Section \ref{sec:Latent AR}, we avoided this problem and proved ergodicity using a general coupling result for processes
which are not necessarily Markov.
If the dynamics of the model  are specified conditionally to the realization of a covariate process, then
a non-homogeneous version of this coupling technique might be possible to be derived
for defining  the conditional distribution of $Y\vert Z$ and then to prove existence of a stationary stochastic process $\left((Y_t,Z_t)\right)_{t\in \Z}$ which satisfies  the aforementioned recursions, by following  the first part of the proof of Theorem \ref{goodresult}.
However,  the probability kernel $Q$  does not have not a positive density w.r.t  a dominating measure and therefore
the second part of this  proof cannot be adapted when covariates are included.
In general,
studying ergodicity properties of a stochastic process which includes simultaneously a latent process and a covariate process is  more challenging problem and will be considered elsewhere.
\label{remarkimp}
\end{Remark}

\begin{proof}
We first show that the almost sure limit $\lim_{s\rightarrow \infty}P_{Z_{t-s}}\cdots P_{Z_t}(x,y)$ exists for each $y\in E$ and does not depend on $x$.
For Markov chains, this condition is comparable to the weak ergodicity notion, but here the limit is taken in the backward sense.
See \citet{Seneta} for several sufficient conditions ensuring weak ergodicity properties of time-inhomogeneous Markov chains, using ergodicity coefficients.
Recall  that the so-called Dobrushin's contraction coefficient of a stochastic matrix $P$ is defined by
$$c(P)=\frac{1}{2}\sup_{x\neq y\in E}\Vert P(x,\cdot)-P(y,\cdot)\Vert_{TV},$$
where  for two probability measures $\mu$ and $\nu$ on the finite set $E$, the total variation distance between $\mu$ and $\nu$ is defined by $\Vert\mu-\nu\Vert_{TV}=\sum_{x\in E}\left\vert \mu(x)-\nu(x)\right\vert$.
It is well known that we have the contraction
$$\Vert \mu P-\nu P\Vert_{TV}\leq c(P)\Vert\mu-\nu\Vert_{TV}$$
and for two stochastic matrices $P$ and $Q$, we have $c(PQ)\leq c(P)c(Q)$.

\noindent
Moreover $c(P)\leq 1-\vert E\vert \min_{x,y\in E}P(x,y)$, where $\vert E\vert$ denotes the cardinality of the set $E$.
So Assumption {\bf (E1)} ensures that for all $t\in \Z$, $c\left(P_{Z_t}P_{Z_{t+1}}\cdots P_{Z_{t+m-1}}\right)<1$ a.s.
Now let $x\neq x'\in E$, $t\in\Z$ and $s=km+\ell$.  we obtain by  setting $\rho=1-\eta\vert E\vert$,
$$\Vert P_{Z_{t-s+1}}\cdots P_{Z_t}(x,\cdot)-P_{Z_{t-s+1}}\cdots P_{Z_t}(x',\cdot)\Vert_{TV}\leq 2c\left(P_{Z_{t-km+1}}\cdots P_{Z_t}\right)\leq 2\prod_{j=0}^{k-1} c\left(P_{Z_{t-(j+1)m+1}}\cdots P_{Z_{t-jm}}\right).$$
From Assumption {\bf (E2)}, the covariate process $Z$ is mixing. Then the process $\left(Z_{t-j}\right)_{j\in\Z}$ is also mixing. Indeed, if $\theta_t$ and $\tau$ denote the mappings defined on $G^{\Z}$ by $\theta_t x=(x_{t-i})_{i\in\Z}$ and $\tau x=(x_{i+1})_{i\in\Z}$ respectively, we have $\tau\circ \theta_t=\theta_t\circ \tau^{-1}$. Then for two Borel sets  $A$and $B$, we get
\begin{eqnarray*}
\P\left(\theta_t Z\in A, \tau^n\theta_t Z\in B\right)&=& \P\left(Z\in \theta_t^{-1} A, \tau^{-n} Z\in \theta_t^{-1}B\right)\\
&=& \P\left(\tau^n Z\in \theta_t^{-1} A, Z\in \theta_t^{-1}B\right)\\
&\rightarrow& \P\left(\theta_t Z\in A\right)\P\left(\theta_t Z\in B\right).
\end{eqnarray*}
Moreover, observe that the operator $\tau^m$ is ergodic for $\P_{Z}$.
Indeed, if a Borel  set $A$ is such that $\tau^m A=A$, we obtain, using assumption {\bf E2},
$$\P\left(Z\in A\right)=\P\left(\tau^{km}Z\in A,Z\in A\right)\rightarrow_{k\rightarrow \infty}\P\left(Z\in A\right)^2.$$
Then, we conclude that $\P_Z(A)\in \{0,1\}$, which shows that $\tau^m$ is ergodic.
Now, using Assumption {\bf (E1)}, we have $\E \log c\left( P_{Z_1}\cdots P_{Z_m}\right)<0$.
Then, from the ergodic theorem, we get
$$\prod_{j=0}^{k-1} c\left(P_{Z_{t-(j+1)m+1}}\cdots P_{Z_{t-jm}}\right)=\exp\left(\sum_{j=0}^{k-1}\log c\left(P_{Z_{t-(j+1)m+1}}\cdots P_{Z_{t-jm}}\right)\right)\rightarrow_{k\rightarrow \infty}0.$$

\noindent
In addition,  when  $n\geq s$, we deduce that
$$\Vert P_{Z_{t-s+1}}\cdots P_{Z_t}(x,\cdot)-P_{Z_{t-n}}\cdots P_{Z_t}(x,\cdot)\Vert_{TV}\leq 2c\left(P_{Z_{t-s+1}}\cdots P_{Z_t}\right).$$
From the Cauchy criterion, we deduce that the product of matrices $P_{Z_{t-s+1}}\cdots P_{Z_t}$ converges, when $s\rightarrow \infty$, to
a stochastic matrix whose rows  are all  equal. Then there exists a measurable function $D:G^{\N}\rightarrow E^N$ with $N=\vert E\vert$ such that
$$D\left(Z_t,Z_{t-1},\ldots\right)=\lim_{s\rightarrow \infty} P_{Z_{t-s+1}}\cdots P_{Z_t}(x,\cdot)\mbox{ a.s}.$$
Setting $\overline{D}_t=D\left(Z_t,Z_{t-1},\ldots\right)$, $\overline{D}_t$ is a random probability measure on $E$.
For $t\in\Z$, $z\in G^{\Z}$, $k$ a non-negative integer and $y_0,y_1,\ldots,y_k\in E$, we set
$$\mu_{t:t+k}(z;y_0,y_1,\ldots,y_k)=\prod_{i=1}^kP_{z_{t+i}}(y_{i-1},y_i)\overline{D}_t\left(y_0\right).$$
From the Kolmogorov extension theorem, there exists for $\P_{Z}-$almost all values of $z\in \mathcal{B}\left(G^{\Z}\right)$ a unique measure $\mu(z,\cdot)$ on $E^{\Z}$ with marginals $\mu_{t:t+k}(z,\cdot)$. Hence,
if $\zeta$ denotes the probability distribution of $Z$,
the measure $\gamma$ defined by
$$\gamma(A\times B)=\int_B \mu(z,A)\zeta(dz),\quad (A,B)\in \mathcal{B}(E^{\Z})\times \mathcal{B}(F^{\Z}),$$
is that of a couple from a  stationary process $(Y,Z)$ satisfying (\ref{cond}).
\noindent
To show uniqueness,  let $\left(Y'_t\right)_{t\in\Z}$ be another stochastic process satisfying (\ref{cond}). Then the distribution of $Y'\vert Z=z$ is that of a non-homogeneous Markov chain with transitions $\{P_{z_t}:t\in\Z\}$. As shown before, this conditional distribution is unique and equal to $\mu(z,\cdot)$.

\noindent
Next we show ergodicity of the process $\left((Y_t,Z_t)\right)_{t\in\Z}$. To this end, we use an approach introduced in \citet{Cog} for the study of Markov processes in random environment. This type of argument is also used in \citet{Sinn} for positive transition matrix $P_{Z_t}$ and we give here a more general and shorter proof. The approach used in \citet{Cog} consists in considering the Markov kernel $Q$ on $E\times G^{\Z}$ defined by
$$Q\left((x,z),\{y\}\times A\right)=P_{z_1}(x,y)\mathds{1}_A(\tau z),\quad A\in\mathcal{B}\left(F^{\Z}\right).$$
If $\nu$ denotes the probability distribution $(Y_t,\tau^t Z)$ which takes values in $E\times G^{\Z}$, then $\nu$ is invariant for $Q$ and
the process $(H_t)_{t\in\Z}$ defined by $H_t=(Y_t,\tau^t Z)$ is a Markov chain of transition kernel $Q$.
Let $C$ be a $\nu-$ invariant set, i.e $Q((x,z),C)=1$ for $\nu-$almost every $(x,z)\in C$.
Using Corollary $5.11$ in \citet{Hairer}, the Markov chain $(H_t)_{t\in\Z}$ forms an ergodic process if and only if every $\nu-$invariant set $C$ is of measure $0$ or $1$.
In our case, we have $C=\cup_{x\in E}\{x\}\times C_x$ for some $C_x\in\mathcal{B}\left(F^{\Z}\right)$.
To this end, we first note  that if $C$ is $\nu-$invariant, then
$$\nu(C)=\nu Q(C)=\int_C d\nu(x,z)Q((x,z),C)+\int_{C^c}d\nu(x,z)Q((x,z),C)=\nu(C)+\int_{C^c}d\nu(x,z)Q((x,z),C).$$
Then we get $Q((x,z),C)=0$ for $\nu-$almost every $(x,z)\in C^c$, the complement of $C$ in $E\times G^{\Z}$.
Hence, we obtain  $Q((x,z),C)=Q\mathds{1}_C(x,z)=\mathds{1}_C(x,z)$ for $\nu-$almost every $(x,z)$.
But this also gives $Q^m\mathds{1}_C=\mathds{1}_C$, $\nu$ a.e, where $m$ has been defined by  assumption ({\bf E1}).
Moreover,  we have that
\begin{equation}\label{egal}
Q^m((x,z),C)=\sum_{y\in E}\mathds{1}_{C}(y,\tau^mz)\left[P_{z_1}\cdots P_{z_m}\right](x,y).
\end{equation}
We write $A=B$ $\nu-$a.e. if $\nu\left(A\Delta B\right)=0$ where $A\Delta B$ denotes the symmetric difference of the sets $A$ and $B$, i.e $A\Delta B=(A\cap B^c)\cup(A^c\cap B)$.
From assumption ({\bf E1}), all the entries of the matrix $P_{z_1}\cdots P_{z_m}$ are positive. Then we deduce that for
almost every $(x,z)\in C$, we have $(y,\tau^m z)\in C$ for all $y\in E$. We set $D=\cap_{y\in E}C_y$.
Let us denote by $\nu_1$ and $\nu_2$ the marginals of $\nu$. We first note that for all $A\in\mathcal{B}\left(G^{\Z}\right)$, we have
$$\nu\left(\{x\}\times A\right)=\int_A\P\left(Y_0=x\vert Z=z\right)\nu_2(dz)=\sum_{y\in E}\int_A\P\left(Y_0=x\vert Y_{-m}=y,Z=z\right)\P\left(Y_n=y\vert Z=z\right)\nu_2(dz).$$
We get
$$\nu\left(\{x\}\times A\right)\geq \int_A \eta_z \nu_2(dz),\quad \eta_z=\min_{x,y\in E}\P\left(Y_0=x\vert Y_{-m}=y,Z=z\right).$$
Employing again assumption ({\bf E1}), we have $\eta_z>0$ for all $z$ and we deduce that $\nu_2(A)=0$ as soon as $\nu\left(\{x\}\times A\right)=0$.
Now for $x\in E$, we set $B_x=\tau^m C_x\setminus D$. As stated above we have
$$\nu\left(\{(x,z): z\in B_x\}\right)=\sum_{x\in E}\nu\left(\{x\}\times B_x\right)=0.$$
We conclude that $\nu_2(B_x)=0$ for all $x\in E$ and then $\nu_2\left(\tau^mC_x\setminus C_x\right)=0$. By stationarity,
$\nu_2\left(\tau^m C_x\right)=\nu_2\left(C_x\right)$. Therefore,  for every $x\in E$, $\tau^m C_x=C_x$, $\mu-$a.e.
But using assumption {\bf E2}, we have that
$$\nu_2\left(C_x\right)=\nu_2\left(\tau^{km} C_x\cap C_x\right)\rightarrow_{k\rightarrow \infty}\nu_2\left(C_x\right)^2.$$
Then, we conclude that $\nu_2(C_x)\in \{0,1\}$. If $\nu_2(C_x)=0$ for every $x$, we easily get $\nu(C)=0$. Now if there exists
$x\in E$ such that $\nu_2(C_x)=1$, we have, using the equality $\nu_2(B_x)=0$,
$$1\leq \nu_2(C_x)=\nu_2\left(\tau^m C_x\right)=\nu_2\left(\tau^m C_x\cap D\right)\leq \nu_2(D)\leq \min_{y\in E}\nu_2(C_y).$$
Then $\nu_2(C_y)=1$ for each $y\in E$. Finally we obtain
$$\nu(C)=\sum_{y\in E}\nu\left(\{y\}\times C_y\right)=\sum_{y\in E}\nu_1(y)=1.$$
Hence, we have shown that the process $\left(H_t\right)_{t\in \Z}$ is ergodic and so is the process $\left((Y_t,Z_t\right)_{t\in\Z}$.

\end{proof}

\subsection{Application to the multinomial logistic model with covariates}

We assume here that conditionally to a covariate process $(Z_t)_{t\in\Z}$ taking values in $\R^d$, the process $(Y_t)_{t\in\Z}$ is a $q-$order Markov chain such that
$$\P\left(Y_t=e_j\vert Y_{t-1},\ldots,Y_{t-q},Z\right)=\frac{\exp\left(g_j\left(Y_{t-1},\ldots,Y_{t-q};Z_t\right)\right)}{1+\sum_{s=1}^{N-1}\exp\left(g_s\left(Y_{t-1},\ldots,Y_{t-q};Z_t\right)\right)}:=Q_{Z_t}\left(Y_{t-q:t-1},e_j\right)$$
for some measurable functions $g_j:E^q\times \R^d$, $1\leq j\leq N-1$.
Let us check that assumption {\bf E1} is satisfied for the conditional Markov chain $(X_t)_{t\in\Z}$ defined
by $X_t=\left(Y_t^{T},Y_{t-1}^{T},\ldots,Y_{t-q+1}^{T}\right)^{T}$. Conditionally to $Z$, the process $\left(X_t\right)_{t\in\Z}$ defines a time-inhomogeneous Markov chains such that
\begin{eqnarray*}
P_{Z_t}\left((u_1,\ldots,u_q),(v_1,\ldots,v_q)\right):&=& Q_{Z_t}\left((u_1,\ldots,u_q),v_1\right)\prod_{s=1}^{q-1}\mathds{1}_{v_{s+1}=u_s}.
\end{eqnarray*}
Since the transition $Q_{Z_t}$ takes only positive values, the assumption {\bf E1} follows by taking
$m=q$. Then assuming {\bf E2} for the covariate process, Theorem \ref{goodresult} applies and guarantees the ergodicity of the process $\left((Y_t,Z_t)\right)_{t\in\Z}$. These results show that our approach simplifies conditions required to obtain consistency and asymptotic
normality of the maximum likelihood estimator, even in the case of considering covariates. However, existence of moments for the covariate process
is still required to study large sample properties the maximum likelihood estimator.

{Comparing these results with the work of  \citet{Kaufmann(1987)}, we note that likelihood inference can be developed without
making assumptions on the design of covariates and assuming that they are bounded. The work by \citet{Kaufmann(1987)} did not study ergodicity
properties of categorical time series models with covariates though.
Additional previous work by \citet{Fokianos2002a} on likelihood estimation
employs an assumption regarding ergodicity of the joint process $(Y^{T}_{t}, Z^{T}_{t})^{T}$ (see Assumption A in \citet[pp.16--17]{Fokianos2002a}).
Theorem \ref{goodresult} shows that such assumptions are not necessary,  at least in the context of categorical time
series models.  Finally, comparing our work with that of \citet{deJong2011} who
consider the case of a probit model for  binary time series we see that the results we obtain apply in this  case
without assuming an alpha-mixing condition   on the covariate process.

\section*{Acknowledgements}
Part of this work was done while K. Fokianos was with the Department of Mathematics \& Statistics, University of Cyprus. The authors
thank the Editor, the Associate Editor and a referee for several useful comments and suggestions.

\newpage


\begin{thebibliography}{}

\bibitem[\protect\citeauthoryear{Biswas and Song}{Biswas and
  Song}{2009}]{Biswas2009}
Biswas, A. and P.~X.-K. Song (2009).
\newblock Discrete-valued {ARMA} processes.
\newblock {\em Statistics \& Probability Letters\/}~{\em 79}, 1884--1889.

\bibitem[\protect\citeauthoryear{Bollerslev}{Bollerslev}{1986}]{Bollerslev(1986)}
Bollerslev, T. (1986).
\newblock Generalized autoregressive conditional heteroskedasticity.
\newblock {\em Journal of Econometrics\/}~{\em 31}, 307--327.

\bibitem[\protect\citeauthoryear{Bradley}{Bradley}{2007}]{Brad}
Bradley, R. (2007).
\newblock {\em Introduction to Strong Mixing Conditions. {V}ol. 1}.
\newblock Kendrick Press, Heber City, UT.

\bibitem[\protect\citeauthoryear{Breen, Glosten, and Jagannathan}{Breen
  et~al.}{1989}]{Breen1989}
Breen, W., L.~R. Glosten, and R.~Jagannathan (1989).
\newblock Economic significance of predictable variations in stock index
  returns.
\newblock {\em The Journal of Finance\/}~{\em 44}, 1177--1189.

\bibitem[\protect\citeauthoryear{Bressaud, Fern\'andez, and Galves}{Bressaud
  et~al.}{1999}]{Bres}
Bressaud, X., R.~Fern\'andez, and A.~Galves (1999).
\newblock Decay of correlations for non-{H}\"{o}lderian dynamics. a coupling
  approach.
\newblock {\em Electron. J. Probab.\/}~{\em 4}, 1--19.

\bibitem[\protect\citeauthoryear{Butler and Malaikah}{Butler and
  Malaikah}{1992}]{Butler1992}
Butler, K.~C. and S.~Malaikah (1992).
\newblock Efficiency and inefficiency in thinly traded stock markets: Kuwait
  and {S}audi {A}rabia.
\newblock {\em Journal of Banking \& Finance\/}~{\em 16}, 197--210.

\bibitem[\protect\citeauthoryear{Christoffersen, Diebold, Mariano, Tay, and
  Tse}{Christoffersen et~al.}{2007}]{Christoffersen2007}
Christoffersen, P., F.~X. Diebold, R.~S. Mariano, A.~Tay, and Y.~K. Tse (2007).
\newblock Direction-of-change forecasts for asian equity markets based on
  conditional variance, skewness and kurtosis dynamics: Evidence from hong kong
  and singapore.
\newblock {\em Journal of Financial Forecasting\/}~{\em 1}, 1--22.

\bibitem[\protect\citeauthoryear{Christoffersen and Diebold}{Christoffersen and
  Diebold}{2006}]{Christoffersen2006}
Christoffersen, P.~F. and F.~X. Diebold (2006).
\newblock Financial asset returns, direction-of-change forecasting, and
  volatility dynamics.
\newblock {\em Management Science\/}~{\em 52}, 1273--1287.

\bibitem[\protect\citeauthoryear{Cogburn}{Cogburn}{1984}]{Cog}
Cogburn, R. (1984).
\newblock The ergodic theory of {M}arkov chains in randon environments.
\newblock {\em Z. Wahrscheinlichkeitstheory verw. Gebiete\/}~{\em 66},
  109--128.

\bibitem[\protect\citeauthoryear{Cox}{Cox}{1981}]{Cox1981}
Cox, D.~R. (1981).
\newblock Statistical analysis of time series: some recent developments.
\newblock {\em Scand. J. Statist.\/}~{\em 8}, 93--115.

\bibitem[\protect\citeauthoryear{Cox and Snell}{Cox and
  Snell}{1970}]{CoxandSnell(1970)}
Cox, D.~R. and E.~J. Snell (1970).
\newblock {\em The Analysis of Binary Data}.
\newblock London: Chapman \& Hall.

\bibitem[\protect\citeauthoryear{de~Jong and Woutersen}{de~Jong and
  Woutersen}{2011}]{deJong2011}
de~Jong, R.~M. and T.~Woutersen (2011).
\newblock Dynamic time series binary choice.
\newblock {\em Econometric Theory\/}~{\em 27}, 673--702.

\bibitem[\protect\citeauthoryear{Doeblin and Fortet}{Doeblin and
  Fortet}{1937}]{Doeblin}
Doeblin, W. and R.~Fortet (1937).
\newblock Sur les chaξnes ΰ liaisons complθtes.
\newblock {\em Bull. Soc. Math. France\/}~{\em 65}, 132--148.

\bibitem[\protect\citeauthoryear{Douc, Doukhan, and Moulines}{Douc
  et~al.}{2013}]{Douc}
Douc, R., P.~Doukhan, and E.~Moulines (2013).
\newblock Ergodicity of observation-driven time series models and consistency
  of the maximum-likelihood estimator.
\newblock {\em Stochastic Processes and their Applications\/}~{\em 123},
  2620--2647.

\bibitem[\protect\citeauthoryear{Doukhan}{Doukhan}{1994}]{Doukhan(1994)}
Doukhan, P. (1994).
\newblock {\em Mixing: properties and examples}.
\newblock Number~85 in Lecture Notes in Statistics. New York: Springer-Verlag.

\bibitem[\protect\citeauthoryear{Doukhan, Fokianos, and Tj{\o}stheim}{Doukhan
  et~al.}{2012}]{Doukhan2012c}
Doukhan, P., K.~Fokianos, and D.~Tj{\o}stheim (2012).
\newblock On weak dependence conditions for {P}oisson autoregressions.
\newblock {\em Statist. Probab. Lett.\/}~{\em 82}, 942--948.

\bibitem[\protect\citeauthoryear{Engle}{Engle}{1982}]{Engle(1982)}
Engle, R.~F. (1982).
\newblock Autoregressive conditional heteroscedasticity with estimates of the
  variance of {U}nited {K}ingdom inflation.
\newblock {\em Econometrica\/}~{\em 50}, 987--1007.

\bibitem[\protect\citeauthoryear{Fahrmeir and Kaufmann}{Fahrmeir and
  Kaufmann}{1987}]{Fahrmeir1987b}
Fahrmeir, L. and H.~Kaufmann (1987).
\newblock Regression models for nonstationary categorical time series.
\newblock {\em Journal of Time Series Analysis\/}~{\em 8}, 147--160.

\bibitem[\protect\citeauthoryear{Fahrmeir and Tutz}{Fahrmeir and
  Tutz}{2001}]{Fahrmeir2001}
Fahrmeir, L. and G.~Tutz (2001).
\newblock {\em Multivariate statistical modelling based on generalized linear
  models\/} (Second ed.).
\newblock Springer Series in Statistics. New York: Springer-Verlag.
\newblock With contributions by Wolfgang Hennevogl.

\bibitem[\protect\citeauthoryear{Fokianos and Kedem}{Fokianos and
  Kedem}{2003}]{Fokianos2003}
Fokianos, K. and B.~Kedem (2003).
\newblock Regression theory for categorical time series.
\newblock {\em Statist. Sci.\/}~{\em 18}, 357--376.

\bibitem[\protect\citeauthoryear{Fokianos and Moysiadis}{Fokianos and
  Moysiadis}{2017}]{Fok2}
Fokianos, K. and T.~Moysiadis (2017).
\newblock Binary time series friven by a latent process.
\newblock {\em Econometrics and Statistics\/}~{\em 2}, 117--130.

\bibitem[\protect\citeauthoryear{Fokianos, Rahbek, and Tjostheim}{Fokianos
  et~al.}{2009}]{Fok}
Fokianos, K., A.~Rahbek, and D.~Tjostheim (2009).
\newblock Poisson autoregression.
\newblock {\em J. Amer. Statist. Assoc.\/}~{\em 104}, 1430--1439.

\bibitem[\protect\citeauthoryear{Fokianos and Tj{\o}stheim}{Fokianos and
  Tj{\o}stheim}{2011}]{Fokianos2011b}
Fokianos, K. and D.~Tj{\o}stheim (2011).
\newblock Log-linear {P}oisson autoregression.
\newblock {\em J. Multivariate Anal.\/}~{\em 102}, 563--578.

\bibitem[\protect\citeauthoryear{Francq and Zako{\"{\i}}an}{Francq and
  Zako{\"{\i}}an}{2010}]{FrancqandZakoian(2010)}
Francq, C. and J.-M. Zako{\"{\i}}an (2010).
\newblock {\em {GARCH} models: Stracture, Statistical Inference and Financial
  Applications}.
\newblock United Kingdom: Wiley.

\bibitem[\protect\citeauthoryear{Gourieroux and Monfort}{Gourieroux and
  Monfort}{1995}]{Gour}
Gourieroux, C. and A.~Monfort (1995).
\newblock {\em Statistics and Econometric Models. Volume 1}.
\newblock Cambridge University Press.

\bibitem[\protect\citeauthoryear{Hairer}{Hairer}{2006}]{Hairer}
Hairer, M. (2006).
\newblock {\em Ergodic properties of Markov processes}.
\newblock Lecture notes available at
  \url{http://www.hairer.org/notes/Markov.pdf}.

\bibitem[\protect\citeauthoryear{Harris}{Harris}{1955}]{Harris}
Harris, T. (1955).
\newblock On chains of infinite order.
\newblock {\em Pacific J. Math.\/}~{\em 5}, 707--724.

\bibitem[\protect\citeauthoryear{Iosifescu and Grigorescu}{Iosifescu and
  Grigorescu}{1990}]{Ios}
Iosifescu, M. and S.~Grigorescu (1990).
\newblock {\em Dependence with Complete Connections and its Applications}.
\newblock Cambridge University Press.

\bibitem[\protect\citeauthoryear{Joe}{Joe}{1997}]{Joe(1997)}
Joe, H. (1997).
\newblock {\em Multivariate Models and Dependence Concepts}.
\newblock London: Chapman \& Hall.

\bibitem[\protect\citeauthoryear{Kaufmann}{Kaufmann}{1987}]{Kaufmann(1987)}
Kaufmann, H. (1987).
\newblock Regression models for nonstationary categorical time series:
  Asymptotic estimation theory.
\newblock {\em Annals of Statistics\/}~{\em 15}, 79--98.

\bibitem[\protect\citeauthoryear{Kauppi}{Kauppi}{2012}]{Kauppi2012}
Kauppi, H. (2012).
\newblock Predicting the direction of the {F}ed's target rate.
\newblock {\em Journal of Forecasting\/}~{\em 31}, 47--67.

\bibitem[\protect\citeauthoryear{Kauppi and Saikkonen}{Kauppi and
  Saikkonen}{2008}]{Kauppi2008a}
Kauppi, H. and P.~Saikkonen (2008).
\newblock Predicting {US} recessions with dynamic binary response models.
\newblock {\em The Review of Economics and Statistics\/}~{\em 90}, 777--791.

\bibitem[\protect\citeauthoryear{Kedem}{Kedem}{1980}]{Kedem1980book}
Kedem, B. (1980).
\newblock {\em Binary Time Series}.
\newblock Marcel Dekker, New York.

\bibitem[\protect\citeauthoryear{Kedem and Fokianos}{Kedem and
  Fokianos}{2002}]{Fokianos2002a}
Kedem, B. and K.~Fokianos (2002).
\newblock {\em Regression models for time series analysis}.
\newblock Hoboken, NJ: Wiley.

\bibitem[\protect\citeauthoryear{L{\"u}tkepohl}{L{\"u}tkepohl}{2005}]{Lut}
L{\"u}tkepohl, H. (2005).
\newblock {\em New Introduction to Multiple Time Series Analysis\/} (1st ed.).
\newblock Berlin: Springer.

\bibitem[\protect\citeauthoryear{MacDonald and Zucchini}{MacDonald and
  Zucchini}{1997}]{MacDonald1997}
MacDonald, I.~L. and W.~Zucchini (1997).
\newblock {\em Hidden {M}arkov and Other Models for Discrete--valued Time
  Series}.
\newblock London: Chapman \& Hall.

\bibitem[\protect\citeauthoryear{McCullagh and Nelder}{McCullagh and
  Nelder}{1989}]{McCullaghandNelder(1989)}
McCullagh, P. and J.~A. Nelder (1989).
\newblock {\em Generalized Linear Models\/} (2nd ed.).
\newblock London: Chapman \& Hall.

\bibitem[\protect\citeauthoryear{Moysiadis and Fokianos}{Moysiadis and
  Fokianos}{2014}]{Fok1}
Moysiadis, T. and K.~Fokianos (2014).
\newblock On binary and categorical time series models with feedback.
\newblock {\em J. Multivariate Anal.\/}~{\em 131}, 209--228.

\bibitem[\protect\citeauthoryear{Neumann}{Neumann}{2011}]{Neumann2011}
Neumann, M. (2011).
\newblock Absolute regularity and ergodicity of poisson count processes.
\newblock {\em Bernoulli\/}~{\em 17}, 1268--1284.

\bibitem[\protect\citeauthoryear{Nyberg}{Nyberg}{2010}]{Nyberg2010a}
Nyberg, H. (2010).
\newblock Dynamic probit models and financial variables in recession
  forecasting.
\newblock {\em Journal of Forecasting\/}~{\em 29}, 215--230.

\bibitem[\protect\citeauthoryear{Nyberg}{Nyberg}{2011}]{nyberg2011forecasting}
Nyberg, H. (2011).
\newblock Forecasting the direction of the us stock market with dynamic binary
  probit models.
\newblock {\em International Journal of Forecasting\/}~{\em 27}, 561--578.

\bibitem[\protect\citeauthoryear{Nyberg}{Nyberg}{2013}]{nyberg2013predicting}
Nyberg, H. (2013).
\newblock Predicting bear and bull stock markets with dynamic binary time
  series models.
\newblock {\em Journal of Banking \& Finance\/}~{\em 37}, 3351--3363.

\bibitem[\protect\citeauthoryear{Pamminger and Fr\"uhwirth-Schnatter}{Pamminger
  and Fr\"uhwirth-Schnatter}{2010}]{PammingerandSchnatter(2010)}
Pamminger, C. and S.~Fr\"uhwirth-Schnatter (2010).
\newblock Model-based clustering of categorical time series.
\newblock {\em Bayesian Anal.\/}~{\em 5}, 345--368.

\bibitem[\protect\citeauthoryear{Qaqish}{Qaqish}{2003}]{Qaqish2003}
Qaqish, B.~F. (2003).
\newblock A family of multivariate binary distributions for simulating
  correlated binary variables with specified marginal means and correlations.
\newblock {\em Biometrika\/}~{\em 90}, 455--463.

\bibitem[\protect\citeauthoryear{Russell and Engle}{Russell and
  Engle}{1998}]{Engle1998}
Russell, J.~R. and R.~F. Engle (1998).
\newblock {Econometric Analysis of Discrete-Valued Irregularly-Spaced Financial
  Transactions Data Using a New Autoregressive Conditional Multinomial Model}.
\newblock {\em SSRN eLibrary\/}.

\bibitem[\protect\citeauthoryear{Russell and Engle}{Russell and
  Engle}{2005}]{Engle2005}
Russell, J.~R. and R.~F. Engle (2005).
\newblock A discrete-state continuous-time model of financial transactions
  prices and times.
\newblock {\em Journal of Business and Economic Statistics\/}~{\em 23},
  166--180.

\bibitem[\protect\citeauthoryear{Rydberg and Shephard}{Rydberg and
  Shephard}{2003}]{Rydberg2003}
Rydberg, T.~H. and N.~Shephard (2003).
\newblock Dynamics of trade-by-trade price movements: decomposition and models.
\newblock {\em Journal of Financial Econometrics\/}~{\em 1}, 2--25.

\bibitem[\protect\citeauthoryear{Samorodnitsky}{Samorodnitsky}{2016}]{Samo}
Samorodnitsky, G. (2016).
\newblock {\em Stochastic Processes and Long Range Dependence}.
\newblock Springer.

\bibitem[\protect\citeauthoryear{Seneta}{Seneta}{2006}]{Seneta}
Seneta, E. (2006).
\newblock {\em Non-negative matrices and {M}arkov chains}.
\newblock Springer Series in Statistics. Springer, New York.

\bibitem[\protect\citeauthoryear{Sinn and Poupart}{Sinn and
  Poupart}{2011}]{Sinn}
Sinn, M. and P.~Poupart (2011).
\newblock Asymptotic theory for linear-chain conditional random fields.
\newblock In G.~Gordon, D.~Dunson, and M.~Dudik (Eds.), {\em Proceedings of the
  Fourteenth International Conference on Artificial Intelligence and
  Statistics}, Volume~15 of {\em Proceedings of Machine Learning Research},
  Fort Lauderdale, FL, USA, pp.\  679--687.

\bibitem[\protect\citeauthoryear{Slud and Kedem}{Slud and
  Kedem}{1994}]{SludKedem1994}
Slud, E.~V. and B.~Kedem (1994).
\newblock Partial likelihood analysis of logistic regression and
  autoregression.
\newblock {\em Statist. Sinica\/}~{\em 4}, 89--106.

\bibitem[\protect\citeauthoryear{Startz}{Startz}{2008}]{startz2008binomial}
Startz, R. (2008).
\newblock Binomial autoregressive moving average models with an application to
  {U}{S} recessions.
\newblock {\em Journal of Business \& Economic statistics\/}~{\em 26}, 1--8.

\bibitem[\protect\citeauthoryear{Stern and Coe}{Stern and
  Coe}{1984}]{Stern1984}
Stern, R.~D. and R.~Coe (1984).
\newblock A model fitting analysis of daily rainfall data.
\newblock {\em Journal of the Royal Statistical Society. Series A\/}~{\em 147},
  pp. 1--34.

\bibitem[\protect\citeauthoryear{Ter{\"a}svirta}{Ter{\"a}svirta}{1994}]{Terasvirta(1994)}
Ter{\"a}svirta, T. (1994).
\newblock Specification, estimation, and evaluation of smooth transition
  autoregressive models.
\newblock {\em Journal of the American Statistical Association\/}~{\em 89},
  208--218.

\bibitem[\protect\citeauthoryear{Tj{\o}stheim}{Tj{\o}stheim}{2012}]{Tjostheim2012}
Tj{\o}stheim, D. (2012).
\newblock Rejoinder on: {S}ome recent theory for autoregressive count time
  series.
\newblock {\em TEST\/}~{\em 21}, 469--476.

\bibitem[\protect\citeauthoryear{Wei{\ss}}{Wei{\ss}}{2011}]{Weiss2011}
Wei{\ss}, C. (2011).
\newblock Generalized choice models for categorical time series.
\newblock {\em J. Statist. Plann. Inference\/}~{\em 141}, 2849--2862.

\bibitem[\protect\citeauthoryear{Wu and Cui}{Wu and
  Cui}{2014}]{wu2014parameter}
Wu, R. and Y.~Cui (2014).
\newblock A parameter-driven logit regression model for binary time series.
\newblock {\em Journal of Time Series Analysis\/}~{\em 35}, 462--477.

\bibitem[\protect\citeauthoryear{Zeger and Qaqish}{Zeger and
  Qaqish}{1988}]{Zeger1988a}
Zeger, S.~L. and B.~Qaqish (1988).
\newblock Markov regression models for time series: a quasi-likelihood
  approach.
\newblock {\em Biometrics\/}~{\em 44}, 1019--1031.

\end{thebibliography}
\end{document}